\date{\today}
\renewcommand{\Re}{{\mathrm{Re} \,}}
\renewcommand{\Im}{{\mathrm{Im} \,}}
\newtheorem{theorem}{Theorem}[section]
\newtheorem{lemma}[theorem]{Lemma}
\theoremstyle{definition}
\newtheorem*{definition 1}{Definition 1}
\newtheorem*{definition 2}{Definition 2}
\newtheorem*{definition 3}{Definition 3}
\newtheorem*{definition 4}{Definition 4}
\newtheorem*{definition 5}{Definition 5}
\newtheorem{remark}[theorem]{Remark}
\theoremstyle{plain}
\newtheorem{definition}{Definition}[section]
\allowdisplaybreaks \numberwithin{equation}{section}
\journal{arXiv}
\begin{document}
\title{Anderson localization for the multi-frequency quasi-periodic \\ CMV matrices and quantum walks}

\author{Bei Zhang}
\ead{beizhang@nankai.edu.cn}
\address{Chern Institute of Mathematics and LPMC, Nankai University, Tianjin 300071, China}

\author{Daxiong Piao\corref{cor1}}
\ead{dxpiao@ouc.edu.cn}
\address{School of Mathematical Sciences,  Ocean University of China, Qingdao 266100, China}

\cortext[cor1]{Corresponding author}

\begin{abstract}
In this paper we prove Anderson localization for multi-frequency quasi-periodic extended CMV matrices with analytic Verblunsky coefficients in the regime of positive Lyapunov exponents.  By constructing a suitable semialgebraic set and combining the Avalanche Principle with a Large Deviation Theorem, we overcome the key obstruction of eliminating double resonances along the orbit, where multi-frequency potentials introduce significant challenges compared to the single-frequency case.  As a direct application, we establish Anderson localization for corresponding analytic multi-frequency quasi-periodic quantum walks via unitary equivalence.
\begin{keyword}
Quasi-periodic CMV matrix; Multi-frequency; Anderson localization; Quantum walks; Gesztesy-Zinchenko cocycle

\medskip
\MSC[2010]  37A30 \sep 42C05 \sep 70G60
\end{keyword}

\end{abstract}

\maketitle


\section{Introduction and Preliminaries}\label{sec:Intro}
\subsection{Research Background}
The study of Anderson localization (AL) for CMV matrices and quantum walks has garnered significant recent interest. Zhu \cite{Zhu-JAT}\footnote{In the paper \cite{Zhu-JAT}, Zhu also corrects some of the existing CMV related issues in the literature. Our present paper fully considers these corrections.} established both AL and strong dynamical localization for the random CMV matrices. In the quasi-periodic setting, Wang and Damanik \cite{WD19-JFA} formulated AL for CMV matrices and quantum walks with Verblunsky coefficients defined by a simple shift. Cedzich and Werner \cite {Cedzich-2021} obtained AL for a class of electric quantum walks and skew-shift CMV matrices. Subsequently, Lin et al. \cite{LPG23-JFA} proved AL for CMV matrices with Verblunsky coefficients defined by a skew-shift, extending localization results for Schr\"{o}dinger operators from \cite{BGS01-CMP}. Following this, Lin et al. \cite{LPG24-arXiv}  also demonstrated AL for CMV matrices where the Verblunsky coefficients are generated by a hyperbolic toral automorphism.

To the best of our knowledge, the localization problem for multi-frequency quasi-periodic CMV matrices remains open. Addressing this gap is the primary focus of the present paper. Our motivation stems strongly from the extensive body of work on AL and related spectral analysis for multi-frequency quasi-periodic Schr\"{o}dinger operators \cite{CS-CMP-1989, BG00-Annals, Bourgain-JAM, HWZ, Zhao20-JFA, GSV16-arXiv, GSV19-Inventiones}. A fundamental observation in the spectral theory of quasi-periodic operators is that the analysis of AL for multi-frequency potentials presents considerably greater complexity than the one-frequency case. A principal obstruction lies in eliminating double resonances along the orbit, a well-documented challenge \cite{Bourgain-book, CS-CMP-1989}. Our investigation reveals that the situation for CMV matrices is analogous. While close connections exist between the spectral theories of CMV operators and Schr\"{o}dinger operators, translating results from the Schr\"{o}dinger setting to the CMV case is often non-trivial.

This non-triviality is particularly evident in the multi-frequency context. For the one-frequency quasi-periodic CMV matrices, Wang and Damanik \cite{WD19-JFA} were able to directly eliminate double resonances using a frequency estimate \cite[Lemma 6.1]{BG00-Annals}. However, no analogous estimate is currently available for the multi-frequency CMV setting. To overcome this hurdle, we employ powerful tools from the theory of semialgebraic sets, specifically \cite[Corollary 9.7 and Lemma 9.9]{Bourgain-book}. Consequently, a key step in our approach is the construction of a suitable semialgebraic set and its decomposition. This strategy, combined with the application of the Avalanche Principle (AP) and a Large Deviation Theorem (LDT), forms the core of our proof.

For completeness, we note that significant progress has also been made on other spectral properties of CMV matrices, such as the regularity of the Lyapunov exponent and the nature of the absolutely continuous spectrum; see, for instance, \cite{Damanik-Kruger-2009,Damanik-Lenz-2007,Fang-JFA, Li-Damanik-Zhou,YangF-2024, Zhang-Nonlinearity}.

The rest of the present paper is organized as follows: Section \ref{sec:Intro} provides a quick introduction to the orthogonal polynomials on the unit circle (OPUC) and the settings of CMV matrices, Szeg\"{o} cocycle, and Lyapunov exponents. Section \ref{sec:tool} contains some basic tools (including the Avalanche Principle, Large Deviation Theorem, estimates for the Lyapunov exponent, Green's function, Poisson's formula, and semialgebraic sets) which are helpful to the following study. Section \ref{sec:proof} addresses the proof of the main theorem (Theorem \ref{main-theorem}). Finally, Section \ref{sec:appl} applies these results to derive AL for the analytic multi-frequency quasi-periodic quantum walks, utilizing the Gesztesy-Zinchenko cocycle framework.

 \subsection{CMV Matrices and Key Concepts}

Let us recall some preliminaries of OPUC that can be seen for more details in  Simon's monograph \cite{Simon-book}.

Let $\mathbb{D}=\{z: |z|<1\}$ be the open unit disk in $\mathbb{C}$ and $\mu$ be a nontrivial probability measure on $\partial \mathbb{D}=\{z: |z|=1\}$.  Then the functions $1, z, z^{2}, \ldots$ are linearly independent in the Hilbert space $L^{2}(\partial \mathbb{D}, d\mu)$. Let $\Phi_{n}(z)$ be the monic orthogonal polynomials, that is,
$\Phi_{n}(z)=P_{n}[z^{n}]$,  $P_{n}\equiv$ projection onto $\{1, z, z^{2}, \ldots, z^{n-1}\}^{\bot}$ in $L^{2}(\partial \mathbb{D}, d\mu)$. Then the orthonormal polynomials are
$$\varphi_{n}(z)=\frac{\Phi_{n}(z)}{\|\Phi_{n}(z)\|_{\mu}},$$
where $\|\cdot\|_{\mu}$ denotes the norm of $L^{2}(\partial \mathbb{D}, d\mu)$.

For any polynomial $Q_{n}(z)$ of degree $n$, its Szeg\H{o} dual $Q_{n}^{*}(z)$ is defined by
$$Q_{n}^{*}(z)=z^{n}\overline{Q_{n}(1/\overline{z})}.$$
Then Szeg\H{o} recursion is
\begin{equation}\label{Szego-1}
\Phi_{n+1}(z)=z\Phi_{n}(z)-\overline{\alpha}_{n}\Phi_{n}^{*}(z),
\end{equation}
where the parameters $\{\alpha_{n}\}_{n=0}^{\infty}$ in $\mathbb{D}$ are called the Verblunsky coefficients.

Orthonormalizing $1, z, z^{-1}, z^{2}, z^{-2},\cdots$, yields a CMV basis $\{\chi_{j}\}_{j=0}^{\infty}$ of $L^{2}(\partial \mathbb{D},d\mu)$, and the matrix representation of multiplication by $z$ relative to the CMV basis gives rise to the CMV matrix $\mathcal{C}$,
$$\mathcal{C}_{ij}=\langle \chi_{i},z\chi_{j} \rangle.$$
Then $\mathcal{C}$ has the form
\begin{equation*}
\mathcal{C}=
\left(
\begin{array}{ccccccc}
\overline{\alpha}_{0}&\overline{\alpha}_{1}\rho_{0}&\rho_{1}\rho_{0}&  &  & & \\
\rho_{0}&-\overline{\alpha}_{1}\alpha_{0}&-\rho_{1}\alpha_{0}& & & &\\
& \overline{\alpha}_{2}\rho_{1} &-\overline{\alpha}_{2}\alpha_{1}& \overline{\alpha}_{3}\rho_{2}&\rho_{3}\rho_{2}& & \\
& \rho_{2}\rho_{1}& -\rho_{2}\alpha_{1}&-\overline{\alpha}_{3}\alpha_{2}&-\rho_{3}\alpha_{2}& & \\
& & &\overline{\alpha}_{4}\rho_{3}&-\overline{\alpha}_{4}\alpha_{3}&\overline{\alpha}_{5}\rho_{4}& \\
& & &\rho_{4}\rho_{3}&-\rho_{4}\alpha_{3}&-\overline{\alpha}_{5}\alpha_{4}& \\
& & & & \ddots & \ddots & \ddots
\end{array}
\right),
\end{equation*}
where $\rho_{j}=\sqrt{1-|\alpha_{j}|^{2}}$, $j\geq 0$. Similarly, we can get the extended CMV matrix
\begin{equation*}
\mathcal{E}=
\left(
\begin{array}{cccccccc}
\ddots&\ddots&\ddots& & & & & \\
& -\overline{\alpha}_{0}\alpha_{-1}&\overline{\alpha}_{1}\rho_{0}&\rho_{1}\rho_{0}&  &  & & \\
& -\rho_{0}\alpha_{-1}&-\overline{\alpha}_{1}\alpha_{0}&-\rho_{1}\alpha_{0}& & & &\\
& & \overline{\alpha}_{2}\rho_{1} &-\overline{\alpha}_{2}\alpha_{1}& \overline{\alpha}_{3}\rho_{2}&\rho_{3}\rho_{2}& & \\
& & \rho_{2}\rho_{1}& -\rho_{2}\alpha_{1}&-\overline{\alpha}_{3}\alpha_{2}&-\rho_{3}\alpha_{2}& & \\
& & & &\overline{\alpha}_{4}\rho_{3}&-\overline{\alpha}_{4}\alpha_{3}&\overline{\alpha}_{5}\rho_{4}& \\
& & & &\rho_{4}\rho_{3}&-\rho_{4}\alpha_{3}&-\overline{\alpha}_{5}\alpha_{4}& \\
& & & & & \ddots & \ddots & \ddots
\end{array}
\right),
\end{equation*}
which is a special five-diagonal doubly-infinite unitary matrix in the standard
basis of $L^{2}(\partial \mathbb{D}, d\mu)$ according to \cite[Subsection 4.5]{Simon-book} and \cite[Subsection 10.5]{Simon-book2}.

\subsection{Szeg\H{o} Cocycle and Lyapunov Exponent}\label{secSCLE}

In this paper, we consider the extended CMV matrix with Verblunsky coefficients generated by an analytic function $\alpha(\cdot):\mathbb{T}^{d}\rightarrow \mathbb{D}$, $d>1$, $\alpha_{n}(x)=\alpha(T^{n}x)=\alpha(x+n\omega)$, where $T: \mathbb{T}^{d}\rightarrow \mathbb{T}^{d} $ is the \emph{shift} map of the form $Tx=x+\omega$, $\mathbb{T}:=\mathbb{R}/\mathbb{Z}$, $x, \omega\in \mathbb{T}^{d}$ are  phase and frequency, respectively. Moreover, the frequency vector $\omega\in \mathbb{T}^{d}$ satisfies the standard Diophantine condition
\begin{equation}\label{standard-DC}
\|k\cdot \omega\|\geq \frac{p}{|k|^{q}}
\end{equation}
for all nonzero $k\in \mathbb{Z}^{d}$, where $p>0$, $q>d$ are some constants, $\|\cdot\|$ denotes the distance to the nearest integer and $|\cdot|$ stands for the sup-norm on $\mathbb{Z}^{d}$ ($|k|=|k_{1}|+|k_{2}|+\cdots+|k_{d}|$, $k_{i}$ is the $i$-th element of the vector $k$).

For the sake of convenience, the set of $\omega$ satisfying the standard Diophantine condition  \eqref{standard-DC} is denoted by $\mathbb{T}^{d}(p,q)\subset \mathbb{T}^{d}$.

Assume that the sampling function $\alpha(x)$ satisfies that
\begin{equation}\label{sampling-function}
\int_{\mathbb{T}^{d}}\log(1-|\alpha(x)|)dx>-\infty
\end{equation}
and it can extend complex analytically to the region
\begin{equation*}
\mathbb{T}^{d}_{h}:=\{x+iy:x\in \mathbb{T}^{d}, y\in \mathbb{R}^{d}, |y|<h\}
\end{equation*}
for some constant $h>0$.

Then the Szeg\H{o} recursion is equivalent to
\begin{equation}\label{Szego-2}
\rho_{n}(x)\varphi_{n+1}(z)=z\varphi_{n}(z)-\overline{\alpha}_{n}(x)\varphi_{n}^{*}(z),
\end{equation}
where $\rho_{n}(x)=\rho(x+n\omega)$ and $\rho(x)=(1-|\alpha(x)|^{2})^{1/2}$. Applying Szeg\H{o} dual to both sides of (\ref{Szego-2}), one can obtain that
\begin{equation}\label{Szego-3}
\rho_{n}(x)\varphi_{n+1}^{*}(z)=\varphi_{n}^{*}(z)-\alpha_{n}(x)z\varphi_{n}(z).
\end{equation}

Then one can get the matrix form of  (\ref{Szego-2}) and (\ref{Szego-3}), i.e.,
\begin{equation*}
\left(
\begin{array}{c}
\varphi_{n+1}\\
\varphi_{n+1}^{*}
\end{array}
\right)
=
S(\omega,z;x+n\omega)
\left(
\begin{array}{c}
\varphi_{n}\\
\varphi_{n}^{*}
\end{array}
\right),
\end{equation*}
where
\begin{equation*}
S(\omega,z;x)=\frac{1}{\rho(x)}
\left(
\begin{array}{cc}
z&-\overline{\alpha}(x)\\
-\alpha(x)z&1
\end{array}
\right).
\end{equation*}
Since $\det S(\omega,z;x)=z$, one always prefers to study the  matrix
\begin{equation*}
M(\omega,z;x)=\frac{1}{\rho(x)}
\left(
\begin{array}{cc}
\sqrt{z}&-\frac{\overline{\alpha}(x)}{\sqrt{z}}\\
-\alpha(x)\sqrt{z}&\frac{1}{\sqrt{z}}
\end{array}
\right)\in \mathbb{S}\mathbb{U}(1,1).
\end{equation*}
This is called the Szeg\H{o} cocycle map. It is easy to verify that $\mathrm{det}M(\omega,z;x)=1$. Then the monodromy matrix (or $n$-step transfer matrix) is defined by
\begin{equation}\label{n-step}
M_{n}(\omega,z;x)=\prod_{j=n-1}^{0}M(\omega,z;x+j\omega).
\end{equation}
According to (\ref{n-step}), it is obvious that
\begin{equation*}
M_{n_{1}+n_{2}}(\omega,z;x)=M_{n_{2}}(\omega,z;x+n_{1}\omega)M_{n_{1}}(\omega,z;x)
\end{equation*}
and
\begin{equation}\label{log-subadditive}
\log\|M_{n_{1}+n_{2}}(\omega,z;x)\|\leq \log\|M_{n_{1}}(\omega,z;x)\|+\log\|M_{n_{2}}(\omega,z;x+n_{1}\omega)\|.
\end{equation}
Let
\begin{equation}\label{un}
u_{n}(\omega,z;x):=\frac{1}{n}\log\|M_{n}(\omega,z;x)\|
\end{equation}
and
\begin{equation}\label{un}
L_{n}(\omega,z):=\int_{\mathbb{T}^{d}}u_{n}(\omega,z;x)dx.
\end{equation}
Integrating the inequality (\ref{log-subadditive}) with respect to $x$ over $\mathbb{T}^{d}$, we have that
\begin{equation*}
L_{n_{1}+n_{2}}(\omega,z)\leq \frac{n_{1}}{n_{1}+n_{2}}L_{n_{1}}(\omega,z)+\frac{n_{2}}{n_{1}+n_{2}}L_{n_{2}}(\omega,z).
\end{equation*}
This implies that
\begin{equation*}
L_{n}(\omega,z)\leq L_{m}(\omega,z) \quad \mathrm{if} \quad m<n,\; m|n
\end{equation*}
and
\begin{equation*}
L_{n}(\omega,z)\leq L_{m}(\omega,z)+C\frac{m}{n} \quad \mathrm{if} \quad m<n.
\end{equation*}

As we know, the transformation $x\rightarrow x+\omega$ is ergodic for any irrational $\omega\in \mathbb{T}^{d}$. Notice that (\ref{log-subadditive}) implies that $\log \|M_{n}(\omega,z;x)\|$ is subadditive. Thus, according to Kingman's subadditive ergodic theorem, the limit
\begin{equation}\label{Lyapunov-exponent}
L(\omega,z)=\lim_{n\rightarrow\infty}L_{n}(\omega,z)
\end{equation}
exists. This is called the Lyapunov exponent. Throughout this paper, we let $\gamma$ be the lower bound of the Lyapunov exponent. On the other hand, Furstenberg-Kesten theorem indicates that the limit also exists for $ a.e.\;x$:
\begin{equation}\label{Lyapunov-exponent-1}
\lim_{n\rightarrow\infty}u_{n}(\omega,z;x)=\lim_{n\rightarrow\infty}L_{n}(\omega,z)=L(\omega,z).
\end{equation}
\begin{definition}[Generalized eigenvalues and generalized eigenfunctions] We call $z$ a generalized eigenvalue of the extended CMV matrix $\mathcal{E}$ , if there exists a nonzero, polynomially bounded function $u$ such
that $\mathcal{E}= z u$ where $u = \{u(n)\}_{n\in \mathbb{Z}}$. We call $u$ a generalized eigenfunction.
\end{definition}

\begin{definition}[Anderson localization] We say that extended CMV matrix $\mathcal{E}$ exhibits Anderson localization on
$\mathcal{I} \subset \partial\mathbb{D}$ , if $\mathcal{E}$ has only pure point spectrum in $\mathcal{I}$ and for every eigenvalue its eigenfunction $u$ decay exponentially in $n$.
\end{definition}

 \subsection{Main Results}

Our main results are following theorem and applications to quantum walks.
\begin{theorem}\label{main-theorem}
Let $\mathcal{I_{\alpha}} \subset \partial\mathbb{D}$ be a compact spectral interval. Consider the extended CMV matrices $\mathcal{E}(x)$ with Verblunsky coefficients $\alpha_n(x) = \alpha(x + n\omega)$, where $\alpha: \mathbb{T}^d \to \mathbb{D}$ is analytic. Assume that for every $z \in \mathcal{I_{\alpha}}$ and every $\omega \in \mathbb{T}^{d}(p,q)$, there exists a constant $\gamma > 0$ such that the Lyapunov exponent satisfies $L(\omega, z) > \gamma > 0$. Then for fixed $x_0$ and a.e. $\omega \in \mathbb{T}^{d}(p,q)$, $\mathcal{E}(x_0)$ exhibits Anderson localization on $\mathcal{I_{\alpha}}$.
\end{theorem}

This theorem has applications to a class of quantum walks; see Theorem \ref{thm:qw-al}.

\section{Basic Tools}\label{sec:tool}
In this section, we require some useful lemmas which could have applications in the following research. To begin with, we introduce some useful notations\cite[Definition 2.3]{BGS02-Acta}.

For any positive numbers $a, b$ the notation $a\lesssim b$ means $Ca\leq b$ for some constant $C>0$. By $a\ll b$ we mean that the constant $C$ is very large. If both $a\lesssim b$ and $a\gtrsim b$, then we write $a\asymp b$.
\subsection{Avalanche Principle}
\begin{lemma}\label{avalanche-principle}\cite[Proposition 2.2]{GS01-Annals}
Let $A_{1},\ldots,A_{m}$ be a sequence of arbitrary unimodular $2\times 2$-matrices. Suppose that
\begin{equation}\label{AP-1}
\min_{1\leq j\leq m}\|A_{j}\|\geq \mu\geq m
\end{equation}
and
\begin{equation}\label{AP-2}
\max_{1\leq j< m}\big[\log\|A_{j+1}\|+\log\|A_{j}\|-\log\|A_{j+1}A_{j}\|\big]<\frac{1}{2} \log\mu.
\end{equation}
Then
\begin{equation}\label{AP-3}
\Big|\log\|A_{m}\cdots A_{1}\|+\sum_{j=2}^{m-1}\log\|A_{j}\|-\sum_{j=1}^{m-1}\log\|A_{j+1}A_{j}\|\Big|<C_{A}\frac{m}{\mu},
\end{equation}
where $C_{A}$ is an absolute constant.
\end{lemma}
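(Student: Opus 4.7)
The plan is to follow the Goldstein--Schlag strategy based on the singular value decomposition. Each unimodular matrix $A_j$ has singular values $\lambda_j := \|A_j\|$ and $\lambda_j^{-1}$, so there exist unit vectors $u_j, v_j$ with orthogonal counterparts $u_j^\perp, v_j^\perp$ such that
\begin{equation*}
A_j \;=\; \lambda_j\, u_j v_j^{*} \;+\; \epsilon_j \lambda_j^{-1}\, u_j^\perp (v_j^\perp)^{*}, \qquad \epsilon_j = \pm 1.
\end{equation*}
Since $\lambda_j \geq \mu \gg 1$, the rank-one ``dominant'' part approximates $A_j$ in operator norm with relative error of size $\mu^{-2}$. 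Applying this decomposition to a two-factor product gives
\begin{equation*}
\log\|A_{j+1}A_j\| \;=\; \log\lambda_{j+1} + \log\lambda_j + \log|\langle v_{j+1}, u_j\rangle| + O(\mu^{-2}),
\end{equation*}
so hypothesis \eqref{AP-2} is equivalent to the crucial angle bound $|\langle v_{j+1}, u_j\rangle| \gtrsim \mu^{-1/2}$ for every $j$. This is a quantitative non-degeneracy condition asserting that the expanding direction of $A_j$ is far from orthogonal to the contracting direction of $A_{j+1}$, which is exactly what enables the rank-one approximations to compose constructively.

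With this in hand I iterate the SVD inside the full product $A_m\cdots A_1$, expanding into $2^m$ terms according to which piece of each $A_j$ is chosen. The all-dominant term reads
\begin{equation*}
\Big(\prod_{j=1}^{m}\lambda_j\Big)\Big(\prod_{j=1}^{m-1}\langle v_{j+1}, u_j\rangle\Big)\, u_m v_1^{*},
\end{equation*}
whose norm is at least $\mu^{(m+1)/2}$ by the angle bound. Every other term swaps at least one dominant piece for its subdominant counterpart; each such swap costs a net factor of at most $\mu^{-1}$ (a loss of $\mu^{-2}$ in the singular value partially offset by at most $\mu^{1/2}$ gain in each of the two adjacent inner products, which were a priori only bounded below by $\mu^{-1/2}$). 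Summing all $2^m - 1$ corrections and using $\mu \geq m$ then yields a relative error of size $O(m/\mu)$, so that
\begin{equation*}
\log\|A_m\cdots A_1\| \;=\; \sum_{j=1}^{m}\log\lambda_j + \sum_{j=1}^{m-1}\log|\langle v_{j+1}, u_j\rangle| + O(m/\mu).
\end{equation*}
Substituting the two-factor identity above for each $\log\|A_{j+1}A_j\|$ triggers a clean telescoping cancellation that reduces exactly to the conclusion \eqref{AP-3}, with the constant $C_A$ absorbing all implicit constants.

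The main technical obstacle is the bookkeeping in the $2^m$-term expansion: one must verify that the aggregate contribution of all non-dominant terms stays of size $O(m/\mu)$ rather than growing exponentially in $m$, which requires uniformly tracking how each singular-value loss interacts with the angle gains at adjacent inner products. The hypothesis $\mu \geq m$ is used precisely so that the resulting error term is much smaller than $\log \mu$ and the rank-one picture remains self-consistent over the full chain. Once this uniform control is in place, the angle bound coming from \eqref{AP-2} propagates through the product and the telescoping identity then emerges essentially algebraically.
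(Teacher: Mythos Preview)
The paper does not prove this lemma at all; it merely cites it as \cite[Proposition 2.2]{GS01-Annals} and invokes the conclusion later in Lemma~\ref{Lemma3.4}. So there is no ``paper's own proof'' to compare against.

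Your sketch is essentially the original Goldstein--Schlag argument via the singular value decomposition, and the overall architecture is correct: the SVD splits each $A_j$ into a rank-one dominant piece plus a $\mu^{-2}$-small correction, hypothesis~\eqref{AP-2} translates into the angle lower bound $|\langle v_{j+1},u_j\rangle|\gtrsim\mu^{-1/2}$, and the all-dominant term in the $2^m$-fold expansion gives exactly the telescoping sum that yields~\eqref{AP-3}. Your accounting for a single swap (loss $\mu^{-2}$ in the singular value, gain at most $\mu^{1/2}$ in each adjacent inner product, net $\mu^{-1}$) is right. The one place where your outline is genuinely thin is the claim that the aggregate of all $2^m-1$ non-dominant terms is $O(m/\mu)$ times the dominant one: when two or more swaps are adjacent, the inner-product gains overlap and one cannot simply multiply independent $\mu^{-1}$ factors. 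In the actual Goldstein--Schlag proof this is handled by an induction on $m$ (or equivalently by a more careful grouping of terms according to the positions of swaps), which you have flagged as the ``main technical obstacle'' but not carried out. With that inductive step filled in, your argument would be complete and would coincide with the cited source.
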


\subsection{Large Deviation Theorem}
Now we turn attention to proving the large deviation theorem which is the most basic tool in the theory of localization for CMV matrices. To obtain LDT, we need the following lemma.
\begin{lemma}\label{Lemma3.1}\cite[Proposition 9.1]{GS01-Annals}
Let $d$ be a positive integer. Suppose $u: D(0,2)^{d}\rightarrow [-1,1]$ is subharmonic in each variable; i.e., $z_{1}\rightarrow u(z_{1},z_{2},\ldots,z_{d})$ is subharmonic for any choice of $(z_{2},\ldots,z_{d})\in D(0,2)^{d-1}$ and similarly for each of the other variables. Assume furthermore that for some $n\geq 1$
\begin{equation}\label{GS01-(1)}
\sup_{\theta \in \mathbb{T}^{d}}|u(\theta+\omega)-u(\theta)|<\frac{1}{n}.
\end{equation}
Then there exist $\sigma>0$, $\tau>0$, and $c_{0}$ only depending on $d$ and $\varepsilon_{1}$ such that
\begin{equation}\label{GS01-(2)}
\mathrm{mes}\{\theta\in \mathbb{T}^{d}: |u(\theta)-\langle u\rangle|> n^{-\tau}\}<\exp(-c_{0}n^{\sigma}).
\end{equation}
Here $\langle u\rangle=\int_{\mathbb{T}^{d}}u(\theta)d\theta$. If $d=2$ then the range is $0<\tau<\frac{1}{3}-\varepsilon_{2}$ and $\sigma=\frac{1}{3}-\tau-\varepsilon_{2}$ where $\varepsilon_{2}\rightarrow 0$ as $\varepsilon_{1}\rightarrow 0$.
\end{lemma}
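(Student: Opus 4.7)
The plan is to follow the Fourier-analytic approach of Goldstein--Schlag. The statement says, roughly, that a function which is subharmonic in each variable and nearly invariant under the translation $\theta\mapsto\theta+\omega$ must be close to its mean except on a set of sub-exponentially small measure. The two ingredients are a Fourier-coefficient decay bound coming from subharmonicity, and a second Fourier bound coming from the translation hypothesis.

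First I would exploit subharmonicity. For any fixed $(z_2,\ldots,z_d)\in D(0,2)^{d-1}$, the slice $z_1\mapsto u(z_1,z_2,\ldots,z_d)$ is subharmonic on $D(0,2)$ and bounded by $1$. By the Riesz representation theorem, its Fourier coefficient on the circle $\{|z_1|=1\}$ decays like $O(1/|k_1|)$, uniformly in the frozen variables. Integrating against $e^{-2\pi i(k_2\theta_2+\cdots+k_d\theta_d)}$ and applying the same estimate in every other variable yields the multi-variable bound
\[
|\hat u(k)|\lesssim \frac{1}{\max_{1\leq j\leq d}|k_j|}\qquad (k\neq 0).
\]
Second, the shift hypothesis $\sup_\theta|u(\theta+\omega)-u(\theta)|<1/n$ gives $|\hat u(k)|\,|1-e^{2\pi i k\cdot\omega}|<1/n$, hence $|\hat u(k)|\leq 1/(2n\|k\cdot\omega\|)$. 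Combining,
\[
|\hat u(k)|\leq \min\!\Bigl\{\frac{C}{|k|},\;\frac{1}{2n\|k\cdot\omega\|}\Bigr\}.
\]

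Next I would split the Fourier series of $f:=u-\langle u\rangle$ at a truncation $|k|\leq N$. The high-frequency tail $|k|>N$ is controlled by Parseval and the subharmonic decay, giving $L^2$-decay polynomial in $N$. The low-frequency part is split further into non-resonant modes ($\|k\cdot\omega\|\geq\delta$), estimated by $1/(n\delta)$, and resonant modes ($\|k\cdot\omega\|<\delta$), whose count is bounded and each of whose coefficients is controlled by $C/|k|$. Optimizing $N$ and $\delta$ as appropriate powers of $n$ yields $\|f\|_{L^p}\lesssim n^{-\tau}$ for any $p<\infty$ and $\tau$ in the stated range (with the sharper range $\tau<\tfrac13-\varepsilon_2$ in dimension $d=2$ coming from a careful optimization and an additional lattice-point count).

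The final step, and the main obstacle, is upgrading the polynomial $L^p$ bound on $f$ to the sub-exponential measure estimate $\mathrm{mes}\{|f|>n^{-\tau}\}<\exp(-c_0 n^\sigma)$. The route is to show that the same Fourier decomposition yields a BMO bound of size $n^{-\tau}$ on $f$, after which the John--Nirenberg inequality on $\mathbb T^d$ converts it into the desired sub-exponential measure decay. This is the delicate part: the resonant Fourier modes fail both of the two basic bounds, so their contribution has to be absorbed into the BMO norm rather than bounded pointwise, and the balance between $\tau$ and $\sigma$ (and the sharp relation $\sigma=\tfrac13-\tau-\varepsilon_2$ when $d=2$) is dictated by the exponent one can afford while keeping the BMO estimate finite. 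Since the result is precisely \cite[Proposition 9.1]{GS01-Annals}, in the paper we invoke it as an external input.
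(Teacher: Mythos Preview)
The paper does not prove this lemma at all: it is quoted verbatim as \cite[Proposition~9.1]{GS01-Annals} and used as a black box to derive the large deviation theorem (Theorem~\ref{(LDT)-matrix}). Your final sentence already recognizes this, so your proposal and the paper are in agreement on how the statement is handled. The sketch you give of the underlying Goldstein--Schlag argument (Riesz representation giving $|\hat u(k)|\lesssim 1/|k|$, the shift bound $|\hat u(k)|\leq 1/(2n\|k\cdot\omega\|)$, splitting into resonant and non-resonant modes, and the BMO/John--Nirenberg upgrade) is an accurate outline of how that proposition is actually proved in \cite{GS01-Annals}, but none of it appears in the present paper.
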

Based on the above lemma, we get the large deviation estimate.
\begin{theorem}\label{(LDT)-matrix}
Let $\mathcal{I_{\alpha}} \subset \partial\mathbb{D}$ be a compact spectral interval. Suppose that for all $\omega\in \mathbb{T}^{d}(p,q)$ and $z\in \mathcal{I_{\alpha}} \subset \mathbb{D}$, $L(\omega,z)>\gamma>0$. There exist $\sigma=\sigma(p,q)$, $\tau=\tau(p,q)$, $\sigma, \tau \in (0,1)$, $C_{0}=C_{0}(p,q,h)$ such that for $n\geq 1$ one has that
\begin{equation}\label{(LDT)1}
\mathrm{mes}\{x\in \mathbb{T}^{d}: |\log\|M_{n}(\omega,z;x)\|-nL_{n}(\omega,z)|> n^{1-\tau}\}<\exp(-C_{0}n^{\sigma}).
\end{equation}
\end{theorem}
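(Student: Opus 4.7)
The plan is to apply Lemma \ref{Lemma3.1} (the Goldstein--Schlag subharmonic deviation estimate) to a suitably normalized version of $x\mapsto u_{n}(\omega,z;x)$. The task therefore splits into three pieces: (i) exhibit $u_{n}$ as a function subharmonic in each variable on the polydisc $D(0,2)^{d}$; (ii) renormalize it so that its range lies in $[-1,1]$; (iii) verify the quantitative near-invariance (\ref{GS01-(1)}) under translation by $\omega$.

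\emph{Subharmonic extension.} Define $\alpha^{*}(x)\defeq\overline{\alpha(\bar{x})}$, which is holomorphic on the strip $\mathbb{T}^{d}_{h}$ and equals $\overline{\alpha(x)}$ on the real torus. Replacing $\overline{\alpha}$ by $\alpha^{*}$ in the formula for $M(\omega,z;x)$ produces a matrix-valued function holomorphic in $x$ wherever $1-\alpha\alpha^{*}\neq 0$. Since $\alpha(\mathbb{T}^{d})$ is a compact subset of $\mathbb{D}$, continuity gives $|\alpha(x)\alpha^{*}(x)|<1-\varepsilon$ on a slightly narrower strip $\mathbb{T}^{d}_{h'}$ with $0<h'<h$, so $\rho$ is holomorphic and bounded away from zero there, and $M_{n}(\omega,z;x)$ extends holomorphically to $\mathbb{T}^{d}_{h'}$. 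Consequently $\log\|M_{n}(\omega,z;x)\|$ is pluri-subharmonic on $\mathbb{T}^{d}_{h'}$, and after the change of variables $\zeta_{j}=e^{2\pi ix_{j}}$ plus a rescaling chosen so that $\mathbb{T}^{d}_{h'}$ maps inside $D(0,2)^{d}$, it is subharmonic in each variable on $D(0,2)^{d}$.

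\emph{Normalization and near-invariance.} On $\mathbb{T}^{d}_{h'}$ the entries of $M(\omega,z;x)$ are bounded by a constant $C$ depending only on $(\alpha,h')$, whence $0\leq\log\|M_{n}(\omega,z;x)\|\leq Cn$ (using $\|M_{n}\|\geq 1$ from $\det M_{n}=1$). Then $u(x)\defeq (3Cn)^{-1}\log\|M_{n}(\omega,z;x)\|\in[0,1]$ is subharmonic in each variable. The cocycle identity
\begin{equation*}
M_{n}(\omega,z;x+\omega)=M(\omega,z;x+n\omega)\,M_{n}(\omega,z;x)\,M(\omega,z;x)^{-1},
\end{equation*}
combined with $\|M^{-1}\|=\|M\|$ (from $\det M=1$), yields $|\log\|M_{n}(x+\omega)\|-\log\|M_{n}(x)\||\leq 2C$, hence $|u(x+\omega)-u(x)|\leq 2/(3n)<1/n$, verifying (\ref{GS01-(1)}).

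\emph{Conclusion and main obstacle.} Applying Lemma \ref{Lemma3.1} to $u$ gives $\mathrm{mes}\{x\in\mathbb{T}^{d}:|u(x)-\langle u\rangle|>n^{-\tau}\}<\exp(-c_{0}n^{\sigma})$, and multiplying through by $3Cn$ together with $\langle u\rangle=L_{n}(\omega,z)/(3C)$ yields (\ref{(LDT)1}) after an harmless adjustment of the constants. The main technical obstacle is the subharmonic extension itself: one must simultaneously handle the anti-holomorphic factor $\overline{\alpha}$ (dealt with by the holomorphic replacement $\alpha^{*}(x)=\overline{\alpha(\bar{x})}$) and ensure $\rho$ stays bounded away from zero on a uniform complex neighborhood of $\mathbb{T}^{d}$ (dealt with by compactness of $\alpha(\mathbb{T}^{d})\subset\mathbb{D}$). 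Once the resulting holomorphic structure on $\mathbb{T}^{d}_{h'}$ is in place, the remaining estimates are standard, with the Diophantine hypothesis (\ref{standard-DC}) entering only through the $(p,q)$-dependence of the exponents delivered by Lemma \ref{Lemma3.1}.
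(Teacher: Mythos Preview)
Your proposal is correct and follows essentially the same approach as the paper: both apply Lemma~\ref{Lemma3.1} to the normalized function $u_{n}(x)=\frac{1}{n}\log\|M_{n}(\omega,z;x)\|$. The paper's proof is extremely terse (it simply asserts that $u_{n}$ is subharmonic, bounded, and satisfies the hypotheses of Lemma~\ref{Lemma3.1}), whereas you carefully supply the details the paper omits---the holomorphic replacement $\alpha^{*}(x)=\overline{\alpha(\bar{x})}$ to handle the anti-holomorphic entry, the compactness argument keeping $\rho$ bounded away from zero on a smaller strip, the explicit normalization, and the cocycle identity yielding the near-invariance~\eqref{GS01-(1)}.
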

\begin{proof}
Fix some dimension $d$ and $z$. For any $x\in D(0,2)^{d}$, define
$$u_{n}(x)=\frac{1}{n}\log\|M_{n}(\omega,z;x)\|.$$
Then $u_{n}$ is a continuous subharmonic function which is bounded by $1$ in $D(0,1)^{d}$. In addition, $u_{n}$ satisfies the conditions in the above lemma. Thus, \eqref{(LDT)1} is an immediate consequence of \eqref{GS01-(2)}.
\end{proof}

\subsection{Estimate for the Lyapunov Exponent}
\begin{lemma}\label{Lemma3.4}
Assume that for all $\omega\in \mathbb{T}^{d}(p,q)$ and $z\in \mathcal{I_{\alpha}} \subset \mathbb{D}$, $L(\omega,z)>\gamma>0$. Then for any $n\geq 2$,
\begin{equation*}
0\leq L_{n}(\omega,z)-L(\omega,z)<C\frac{(\log n)^{1/\sigma}}{n},
\end{equation*}
where $C=C(p,q,z,\gamma)$ and $\sigma$ is as in LDT.
\end{lemma}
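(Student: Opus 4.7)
The lower bound $L_n \geq L$ is immediate from the subadditivity noted in Section 2: the inequality $L_{n_1+n_2} \leq \tfrac{n_1}{n_1+n_2}L_{n_1} + \tfrac{n_2}{n_1+n_2}L_{n_2}$ makes the sequence $nL_n$ subadditive in $n$, so $L = \lim_n L_n = \inf_n L_n$. The content of the lemma is the matching upper bound, which will combine the Avalanche Principle (Lemma~\ref{avalanche-principle}) with the Large Deviation Theorem (Theorem~\ref{(LDT)-matrix}).

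I first fix a scale $m = \lfloor C_*(\log n)^{1/\sigma}\rfloor$ with $C_* = C_*(p,q,\gamma)$ large enough that $\exp(-C_0 m^\sigma) \leq n^{-10}$, and decompose $M_n(\omega,z;x) = A_s \cdots A_1$ where $A_j = M_m(\omega,z;x+(j-1)m\omega)$ and $s = \lfloor n/m\rfloor$. Applying Theorem~\ref{(LDT)-matrix} at scales $m$ and $2m$ to each of the $s$ translates of $x$, I isolate a bad set $\mathcal{B} \subset \mathbb{T}^d$ of measure at most $2s \exp(-C_0 m^\sigma) \leq n^{-5}$ outside of which both $|\log\|A_j\| - m L_m|$ and $|\log\|A_{j+1}A_j\| - 2m L_{2m}|$ are controlled by $O(m^{1-\tau})$. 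The positivity $L_m, L_{2m} \geq L \geq \gamma$ together with the trivial upper bound $L_m \leq C_2 := \log\|\rho^{-1}\|_\infty$ then verify hypotheses (AP-1) and (AP-2) with $\mu = \exp(m\gamma/2)$ on $\mathbb{T}^d \setminus \mathcal{B}$ for $m$ large.

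Applying Lemma~\ref{avalanche-principle} pointwise on $\mathbb{T}^d \setminus \mathcal{B}$, integrating, using translation invariance of Lebesgue measure, and controlling the contribution from $\mathcal{B}$ via the trivial pointwise bound $|\log\|M_n(x)\|| \leq n C_2$, I arrive at the core identity
\begin{equation*}
L_n + L_m - 2 L_{2m} = O(m/n) = O\bigl((\log n)^{1/\sigma}/n\bigr).
\end{equation*}

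To extract the claimed rate from this identity I iterate it at the dyadic scales $m_k = 2^k m$ for $k = 0,1,\ldots,K$ with $2^K m \asymp n$, obtaining at each step $L_n + L_{m_k} - 2L_{m_{k+1}} = O(2^k m/n)$. Combined with the monotonicity $L_{m_{k+1}} \leq L_{m_k}$, the positivity $L_{m_k} \geq L$, and the trivial bound $L_{m_0} \leq C_2$, a geometric telescoping of these relations yields $L_n - L \lesssim (\log n)^{1/\sigma}/n$. This last step is the main obstacle: the AP identity couples $L_n - L$ to the unknowns $L_m - L$ and $L_{2m} - L$, so one must carefully balance the $O(2^k m/n)$ errors against the geometric decay of $L_{m_k} - L$ across scales in order to avoid a self-referential trap in which the error estimate appears to require the very bound being proved.
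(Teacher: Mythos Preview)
Your proof follows the same AP + LDT template as the paper, but there are two substantive gaps.

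First, your verification of (AP-2) is incorrect. The hypothesis requires
\[
\log\|A_{j+1}\| + \log\|A_j\| - \log\|A_{j+1}A_j\| < \tfrac{1}{2}\log\mu = \tfrac{m\gamma}{4}.
\]
On the good set, LDT gives $\log\|A_j\| = mL_m + O(m^{1-\tau})$ and $\log\|A_{j+1}A_j\| = 2mL_{2m} + O(m^{1-\tau})$, so the left side equals $2m(L_m - L_{2m}) + O(m^{1-\tau})$. To close (AP-2) you therefore need $L_m - L_{2m} < \gamma/8$, and neither the positivity $L_m, L_{2m} \geq \gamma$ nor the trivial bound $L_m \leq C_2$ implies this: a priori $L_m - L_{2m}$ could be as large as $C_2 - \gamma$. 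The paper resolves this by a pigeonhole step. Since $\sum_{j=0}^{k-1}(L_{2^j l_0} - L_{2^{j+1}l_0}) = L_{l_0} - L_{2^k l_0} \leq C(z)$ and $k$ is chosen with $k\gamma > 16C(z)$, at least one consecutive pair must satisfy $L_{l} - L_{2l} < \gamma/16$; that particular $l$ (still of size $\asymp (\log n)^{1/\sigma}$) is the scale at which AP is applied.

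Second, you correctly flag the ``self-referential trap'' in your final paragraph but do not actually resolve it; a proof proposal that ends by naming its main obstacle is incomplete. In fact the naive iteration you describe fails: solving the recursion $a_{k+1} = \tfrac12 a_k + \tfrac12 a_* + O(2^k m/n)$ for $a_k := L_{m_k} - L$ and $a_* := L_n - L$ up to $k = K$ with $2^K m \asymp n$ accumulates an $O(1)$ error, not $O(m/n)$. The paper's route sidesteps this entirely. Rather than vary $m$, it applies the AP identity with the \emph{same} pigeonhole-selected scale $l$ to both $M_n$ and $M_{2n}$; the bulk sums $\sum_j \log\|A_j\|$ and $\sum_j \log\|A_{j+1}A_j\|$ nearly cancel upon subtraction, leaving $|L_n - L_{2n}| \leq C(\log n)^{1/\sigma}/n$. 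The lemma then follows from the forward telescope $L_n - L = \sum_{k\geq 0}(L_{2^k n} - L_{2^{k+1} n})$, whose terms decay like $2^{-k}$ up to logarithmic factors, with no circularity.
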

\begin{proof}
Obviously, $0\leq L_{n}(\omega,z)\leq C(z)$ for all $n$. Let $k$ be a positive integer such that $k\gamma>16C(z)$. Given $n_{0}>10$, let $l_{0}=[C_{1}(\log n_{0})^{1/\sigma}]$ with some large $C_{1}$. Consider the integers $l_{0}, 2l_{0}, \ldots,2^{k}l_{0}$. Then there exists some $0\leq j<k$ such that (with $l_{j}=2^{j}l_{0}$)
\begin{equation}\label{lem3.4-(1)}
L_{l_{j}}(z)-L_{l_{j+1}}(z)<\frac{\gamma}{16}.
\end{equation}
If not, then $C(z)>L_{l_{0}}(z)-L_{2^{k}l_{0}}(z)\geq \frac{k\gamma}{16}>C(z)$, which is a contradiction. Let $l=2^{j}l_{0}$ with the choice of $j$ satisfying \eqref{lem3.4-(1)}.

It is not difficult to check that $M(\omega,z;x)$ is conjugate to an $\mathbb{SL}(2,\mathbb{R})$ matrix $T(\omega,z;x)$ through the matrix
\begin{equation*}
Q=-\frac{1}{1+i}
\left(
\begin{array}{cc}
1&-i\\
1&i
\end{array}
\right)\in \mathbb{U}(2);
\end{equation*}
that is,
\begin{equation*}
T(\omega,z;x)=Q^{*}M(\omega,z;x)Q\in \mathbb{SL}(2,\mathbb{R}),
\end{equation*}
where ``$*$'' stands for the conjugate transpose of a matrix.

Now we apply AP to the matrices $B_{j}=T_{l}(\omega,z;x+(j-1)l\omega)$ for $1\leq j\leq m=[n/l]$ with $\mu=\exp(l\gamma/2)$. Note that $\mu>n^{2}$ if $C_{1}>4/\gamma$. According to LDT, the phases $x$ such that
\begin{equation*}
\min_{1\leq j\leq m} \|B_{j}\|\geq \exp(lL_{l}(\omega,z)-l^{1-\tau})>\exp(l\gamma/2)=\mu>n^{2}
\end{equation*}
form a set $\mathcal{G}_{1}$ whose measure is not exceeding
$$m \exp(-C_{0}l^{\sigma})\leq n\exp(-C_{0}(2^{j}l_{0})^{\sigma})<n^{-2}$$
provided $C_{1}>n^{1/\sigma}$.

Furthermore, combining \eqref{lem3.4-(1)} and LDT,
\begin{align*}
\max_{1\leq j< m}\Big|\log\|B_{j+1}\|+\log\|B_{j}\|-\log\|B_{j+1}B_{j}\| \Big|
&\leq 2l(L_{l}(z)+\frac{\gamma}{32})-2l(L_{2l}(z)-\frac{\gamma}{32})\\
&=2l[(L_{l}(z)-L_{2l}(z))+\frac{\gamma}{16})]\\
&\leq \exp(l\gamma/4)=\frac{1}{2}\log \mu
\end{align*}
up to a set $\mathcal{G}_{2}$ of $x$ of measure not exceeding
$$2m \exp(-C_{0}l^{\sigma})\leq 2n\exp(-C_{0}(l)^{\sigma})<n^{-2}$$
if $C_{1}>n^{1/\sigma}$. Let $\mathcal{G}=\mathcal{G}_{1}\cup \mathcal{G}_{2}$. Then $\mathrm{mes}\mathcal{G}<2n^{-2}$ and \eqref{AP-1}, \eqref{AP-2} hold for all $x \in\mathcal{G}$.

Based on AP, we have that
\begin{equation*}
\Big|\log\|B_{m}\cdots B_{1}\|+\sum_{j=2}^{m-1}\log\|B_{j}\|-\sum_{j=1}^{m-1}\log\|B_{j+1}B_{j}\|\Big|<C\frac{n}{\mu}<Cn^{-1}
\end{equation*}
for all $x\in\mathcal{G}$. Equivalently, one can obtain that
\begin{small}
\begin{equation*}
\Big|\log\|T_{lm}(\omega,z;x)+\sum_{j=2}^{m-1}\log\|T_{l}(\omega,z;x+jl\omega)\|-\sum_{j=1}^{m-1}\log\| T_{l}(\omega,z;x+(j+1)l\omega)T_{l}(\omega,z;x+jl\omega) \|\Big|<Cn^{-1}
\end{equation*}
\end{small}
for all $x\in\mathcal{G}$.

Similarly, for $\log \|T_{lm}(\omega,z;x+lm\omega)\|$ and $\log \|T_{lm}(\omega,z;x)\|$, the phases $x$ such that
\begin{align}	
&\Big| \|\log\| T_{2lm}(\omega,z;x)\|-\|\log\| T_{lm}(\omega,z;x+lm\omega)\|-\|\log\| T_{lm}(\omega,z;x)\| \nonumber\\
&+\|\log\| T_{l}(\omega,z;x+lm\omega)\|+\|\log \|T_{l}(\omega,z;x+(m-1)l\omega)\| \nonumber \\
&-\|\log \|T_{l}(\omega,z;x+lm\omega)T_{l}(\omega,z;x+(m-1) l\omega)\|\Big|\leq \frac{C}{n} \label{Lem3.4-(2)}
\end{align}
form a set with measure not exceeding $Cn^{-2}$.\\
Since
\begin{equation}\label{Lem3.4-(3)}
\big|\|\log\| T_{n}(\omega,z;x)\|-\log\| T_{lm}(\omega,z;x)\| \big|\leq C(\alpha_{n},\alpha_{lm},z)n
\end{equation}
and
\begin{equation}\label{Lem3.4-(4)}
\big|\|\log\| T_{l}(\omega,z;x)\| \big|\leq C(z)l,
\end{equation}
we can conclude from \eqref{Lem3.4-(2)} that
\begin{equation*}
\big|\|\log\| T_{2n}(\omega,z;x)\|-\log\| T_{n}(\omega,z;x+n\omega)\|-\log\| T_{n}(\omega,z;x)\| \big|\leq C(\log n)^{1/\sigma}
\end{equation*}
up to a set of $x$ not exceeding $Cn^{-2}$ in measure.

Integrating the above inequality over $x$, we have that
\begin{equation*}
|L_{2n}(z)-L_{n}(z)|\leq C\frac{(\log n)^{1/\sigma}}{n},
\end{equation*}
where $C=C(\gamma,z,\omega)$.

We can finally prove this lemma by summing over $2^{k}n$ .
\end{proof}
\subsection{Green's Function and Poisson's Formula}\label{3.4}
Define the unitary matrices
\begin{equation*}
\Theta_{n}=
\left(
\begin{array}{cc}
\overline{\alpha}_{n}&\rho_{n}\\
\rho_{n}&-\alpha_{n}
\end{array}
\right).
\end{equation*}
Then one can factorize the matrix $\mathcal{C}$ as
\begin{equation*}
\mathcal{C}=\mathcal{L}_{+}\mathcal{M}_{+},
\end{equation*}
where
\begin{equation*}
\mathcal{L}_{+}=\left(
\begin{matrix}
\Theta_0 &~ & ~\\
~& \Theta_2 & ~\\
~ & ~& \ddots
\end{matrix}
\right),\quad
\mathcal{M}_{+}=\left(
\begin{matrix}
\mathbf{1} &~ & ~\\
~& \Theta_1 & ~\\
~ & ~& \ddots
\end{matrix}
\right).
\end{equation*}

Similarly, the extended CMV matrix can be written as
\begin{equation*}
\mathcal{E}=\mathcal{L}\mathcal{M},
\end{equation*}
where
\begin{equation*}
\mathcal{L}=\bigoplus_{j\in \mathbb{Z}}\Theta_{2j}, \quad \mathcal{M}=\bigoplus_{j\in \mathbb{Z}}\Theta_{2j+1}.
\end{equation*}

We let $\mathcal{E}_{[a,b]}$ denote the restriction of an extended CMV matrix to a finite interval $[a,b]\subset\mathbb{Z}$, defined by
\begin{equation*}
\mathcal{E}_{[a,b]}=P_{[a,b]}\mathcal{E}(P_{[a,b]})^{*},
\end{equation*}
where $P_{[a,b]}$ is the projection $\ell^{2}(\mathbb{Z})\rightarrow \ell^{2}([a,b])$. $\mathcal{L}_{[a,b]}$ and $\mathcal{M}_{[a,b]}$ are defined similarly.

However, the matrix $\mathcal{E}_{[a,b]}$ will no longer be unitary due to the fact that $|\alpha_{a-1}|<1$ and $|\alpha_{b}|<1$. To solve this issue, we need to modify the boundary conditions briefly. With $\beta,\eta \in \partial \mathbb{D}$, define the sequence of Verblunsky coefficients
\begin{equation*}
\tilde{\alpha}_{n}=
\begin{cases}
\beta, \quad & n =a-1;\\
\eta,& n =b;\\
\alpha_{n},& n \notin  \{a-1,b\}.
\end{cases}
\end{equation*}
Denote the extended CMV matrix with Verblunsky coefficients $\tilde{\alpha}_{n}$ by $\tilde{\mathcal{E}}$. Define
\begin{equation*}
\mathcal{E}_{[a,b]}^{\beta,\eta}=P_{[a,b]}\tilde{\mathcal{E}}(P_{[a,b]})^{*}.
\end{equation*}
$\mathcal{L}_{[a,b]}^{\beta,\eta}$ and $\mathcal{M}_{[a,b]}^{\beta,\eta}$ are defined correspondingly. Then $\mathcal{E}_{[a,b]}^{\beta,\eta}$, $\mathcal{L}_{[a,b]}^{\beta,\eta}$ and $\mathcal{M}_{[a,b]}^{\beta,\eta}$ are all unitary.

For $z\in \mathbb{C}$, $\beta, \eta\in\partial \mathbb{D}$, we can define the characteristic determinant of matrix $\mathcal{E}^{\beta,\eta}_{[a,b]}$,
\begin{equation*}
\varphi^{\beta,\eta}_{[a,b]}(z):=\det(z-\mathcal{E}^{\beta,\eta}_{[a,b]}), \quad \phi^{\beta,\eta}_{[a,b]}(z):=(\rho_{a}\cdots\rho_{b})^{-1}\varphi^{\beta,\eta}_{[a,b]}(z).
\end{equation*}
Note that when $a>b$, $\phi^{\beta,\eta}_{[a,b]}(z)=1$.

Since the equation $\tilde{\mathcal{E}}u=zu$ is equivalent to $(z\mathcal{L}^{*}-\mathcal{M})u=0$,  the associated finite-volume Green's functions are as follows:
\begin{equation*}
G^{\beta,\eta}_{[a,b]}(z)=\big(z(\mathcal{L}^{\beta,\eta}_{[a,b]})^{*}-\mathcal{M}^{\beta,\eta}_{[a,b]}\big)^{-1},
\end{equation*}
\begin{equation*}
G^{\beta,\eta}_{[a,b]}(j,k;z)=\langle \delta_{j},G^{\beta,\eta}_{[a,b]}(z)\delta_{k}\rangle,\quad j,k\in [a,b].
\end{equation*}
According to \cite[Proposition 3.8]{Kruger13-IMRN} and \cite[Section B.1]{Zhu-JAT}, for $\beta, \eta\in\partial \mathbb{D}$, the Green's function has the expression:
\begin{equation*}
|G^{\beta,\eta}_{[a,b]}(j,k;z)|=\rho_{j}\cdots\rho_{k-1}\Big| \frac{\varphi^{\beta,\cdot}_{[a,j-1]}(z)\varphi^{\cdot,\eta}_{[k+1,b]}(z)}{\varphi^{\beta,\eta}_{[a,b]}(z)}\Big|,\quad a\leq j\leq k\leq b,
\end{equation*}
where ``$\cdot$" stands for the unchanged Verblunsky coefficient.

From \cite[Lemma 3.9]{Kruger13-IMRN}, if $u$ satisfies $\tilde{\mathcal{E}}u=zu$, Poisson's formula reads
\begin{align*}
u(m)=&G^{\beta,\eta}_{[a,b]}(a, m;z)
\begin{cases}
(z\overline{\beta}-\alpha_{a})u(a)-\rho_{a}u(a+1), \quad & a\text{ even}\\
(z\alpha_{a}-\beta)u(a)+z\rho_{a}u(a+1),& a \text { odd}
\end{cases}\\
&+G^{\beta,\eta}_{[a,b]}(m,b;z)
\begin{cases}
(z\overline{\eta}-\alpha_{b})u(b)-\rho_{b}u(b-1), \quad & b\text{ even}\\
(z\alpha_{b}-\eta)u(b)+z\rho_{b-1}u(b-1),& b \text { odd}
\end{cases}
\end{align*}
for $a<m<b$.

Without loss of generality, we restrict the extended CMV matrix to the interval $[0,n-1]\subset\mathbb{Z}$. Let $\varphi^{\beta,\eta}_{[0,n-1]}(\omega,z;x)=\det \big(z-\mathcal{E}^{\beta,\eta}_{[0,n-1]}\big)$ be the characteristic determinant of matrix $\mathcal{E}^{\beta,\eta}_{[0,n-1]}$. For $z\in \partial \mathbb{D}$, it follows from \cite[Theorem 2]{Wang-JMAA} that the relation between the $n$-step transfer matrix and the characteristic determinant is
\begin{equation}\label{relation}
M_n(\omega,z;x)=(\sqrt{z})^{-n}\Big(\prod_{j=0}^{n-1}\frac{1}{\rho_j}\Big)
\left(
\begin{matrix}
z\varphi^{\beta,\eta}_{[1,n-1]} & \frac{z\varphi^{\beta,\eta}_{[1,n-1]}-\varphi^{\beta,\eta}_{[0,n-1]}}{\alpha_{-1}}\\
z\Big(\frac{z\varphi^{\beta,\eta}_{[1,n-1]}-\varphi^{\beta,\eta}_{[0,n-1]}}{\alpha_{-1}}\Big)^* &\big(\varphi^{\beta,\eta}_{[1,n-1]}\big)^{*}
\end{matrix}
\right).
\end{equation}

Based upon the above analysis, we can get the Green's function estimate which will be of significance in the proof of AL.
\begin{lemma}\label{Green-estimate-element}
Assume $L(z)>\gamma$. Then for $n\gg N_{0}(\gamma)$, there exists a set $\Omega\subset\mathbb{T}^{d}$ satisfying
\begin{equation}\label{Green-estimate-element(1)}
\mathrm{mes}\Omega<\exp(-C_{0}n^{\sigma}),
\end{equation}
where $C_{0}$ and $\sigma$ are as in LDT. Furthermore, for any $x$ outside $\Omega$,
\begin{equation}\label{Green-estimate-element(2)}
|G_{\Lambda}^{\beta,\eta}(j,k;z)|<e^{-L(z)|j-k|+n^{1-}}
\end{equation}
holds, where $\Lambda$ is one of
$$\{[0,n-1], [1,n-1]\}.$$
\end{lemma}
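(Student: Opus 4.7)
The overall strategy is a standard Cramer's-rule reduction followed by a "large numerator, large denominator" analysis via the LDT. Using the formula displayed in Subsection 3.4,
\begin{equation*}
|G^{\beta,\eta}_{[0,n-1]}(j,k;z)|=\rho_{j}\cdots\rho_{k-1}\,\frac{|\varphi^{\beta,\cdot}_{[0,j-1]}(z;x)|\cdot|\varphi^{\cdot,\eta}_{[k+1,n-1]}(z;x)|}{|\varphi^{\beta,\eta}_{[0,n-1]}(z;x)|}\qquad(a\le j\le k\le b),
\end{equation*}
passing to the normalized quantities $\phi=(\prod\rho)^{-1}\varphi$ makes the $\rho$-factors cancel, and the claim reduces to (i) the upper bound $|\phi^{\beta,\cdot}_{[0,j-1]}|\le e^{jL(z)+j^{1-\tau}}$ (and similarly on $[k+1,n-1]$) off a small set, and (ii) the lower bound $|\phi^{\beta,\eta}_{[0,n-1]}|\ge e^{nL(z)-n^{1-\tau}}$ off a small set. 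After establishing both bounds, the product estimate collapses to $e^{-L(z)(k-j+1)+3n^{1-\tau}}\le e^{-L(z)|j-k|+n^{1-}}$, and the case $\Lambda=[1,n-1]$ is treated by the same argument with $n$ replaced by $n-1$.

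For the numerator, the identity \eqref{relation} (and its restriction to the subinterval $[0,j-1]$, respectively $[k+1,n-1]$) shows that each entry of the transfer matrix $M_m(\omega,z;x)$ is, up to unimodular factors, a normalized characteristic determinant $\phi^{\beta,\eta}_{[\cdot,\cdot]}$. Consequently $|\phi^{\beta,\cdot}_{[0,j-1]}(z;x)|\lesssim\|M_j(\omega,z;x)\|$, so it is enough to apply Theorem \ref{(LDT)-matrix} together with Lemma \ref{Lemma3.4}: outside a set of measure $\le\exp(-C_0 j^{\sigma})$ one has $\log\|M_j(\omega,z;x)\|\le jL(\omega,z)+j^{1-\tau}$. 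Summing these exceptional sets over the two relevant subintervals and absorbing the polynomial factor $n$ into the subexponential bound yields an exceptional set of measure still $\le\exp(-\tfrac{1}{2}C_0 n^{\sigma})$.

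The heart of the proof, and the main obstacle, is the lower bound on the denominator, since the LDT as stated only controls $\log\|M_n\|$ from above and below simultaneously, while $\|M_n\|$ only dominates, rather than coincides with, $|\phi^{\beta,\eta}_{[0,n-1]}|$. The plan is to introduce the function
\begin{equation*}
v_n(x)\defeq\tfrac{1}{n}\log\bigl|\phi^{\beta,\eta}_{[0,n-1]}(z;x)\bigr|
\end{equation*}
and verify that (after restricting to a suitable polydisc $D(0,2)^{d}$ in the analytic extension $\mathbb{T}^{d}_{h}$) $v_n$ is subharmonic in each variable, uniformly bounded, satisfies the Lipschitz-type hypothesis \eqref{GS01-(1)} with the Diophantine translation, and has mean $\langle v_n\rangle=L_n(\omega,z)+O(1/n)=L(\omega,z)+O((\log n)^{1/\sigma}/n)$ thanks to Lemma \ref{Lemma3.4} and the identity \eqref{relation} (which identifies $\int\tfrac{1}{n}\log\|M_n\|$ with $\int v_n$ up to a boundary contribution of size $O(1/n)$ coming from the top-left entry of \eqref{relation}). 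Once these hypotheses are in place, Lemma \ref{Lemma3.1} gives $|v_n(x)-L(\omega,z)|<n^{-\tau}$ off a set of measure $\le\exp(-c_0 n^{\sigma})$, i.e.\ precisely the required $|\phi^{\beta,\eta}_{[0,n-1]}(z;x)|\ge e^{nL(\omega,z)-n^{1-\tau}}$. Taking the union with the exceptional set from the numerator step produces $\Omega$ with $\mathrm{mes}\,\Omega<\exp(-C_0 n^{\sigma})$, completing the proof. The technical subtlety is that $v_n$ is only subharmonic in the region where $\rho_j(x)>0$ for all $j$; this is handled using the standing assumption \eqref{sampling-function} together with a truncation of $v_n$, ensuring that the resulting function remains subharmonic and uniformly bounded while changing its mean only by $O(1/n)$.
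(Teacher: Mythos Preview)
You have misread the role of the two intervals in the statement, and this leads you to attack a harder problem than necessary. The conclusion is \emph{not} that \eqref{Green-estimate-element(2)} holds for both $\Lambda\in\{[0,n-1],[1,n-1]\}$; rather, for each $x\notin\Omega$ there is \emph{some} choice of $\Lambda$ (depending on $x$) for which the bound holds. This is exactly how the lemma is used later in the proof of Theorem~\ref{main-theorem} (``one of the intervals $\Lambda$ (depending on $x$) satisfies \eqref{AL-(7)}'').

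With this reading, the denominator lower bound is essentially free, and this is the paper's route. Relation \eqref{relation} exhibits the entries of $M_n(\omega,z;x)$ as bounded linear combinations of $\phi^{\beta,\eta}_{[0,n-1]}$ and $\phi^{\beta,\eta}_{[1,n-1]}$; hence $\|M_n\|\lesssim\max\bigl(|\phi^{\beta,\eta}_{[0,n-1]}|,|\phi^{\beta,\eta}_{[1,n-1]}|\bigr)$, so for every $x$ at least one of the two satisfies $|\phi^{\beta,\eta}_\Lambda|\gtrsim\|M_n\|$. The paper then takes $\Omega$ to be the LDT exceptional set for $\|M_n\|$ (together with the shifted shorter products), picks $\Lambda$ accordingly, and the bound
\[
|G^{\beta,\eta}_\Lambda(j,k;z)|\;\lesssim\;\frac{\|M_j(\omega,z;x)\|\,\|M_{n-k}(\omega,z;T^kx)\|}{\|M_n(\omega,z;x)\|}
\]
combined with Theorem~\ref{(LDT)-matrix} and Lemma~\ref{Lemma3.4} finishes the job.

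Your alternative---a separate LDT for $v_n(x)=\tfrac{1}{n}\log|\phi^{\beta,\eta}_{[0,n-1]}(z;x)|$---could in principle be carried out, but the justification you give for $\langle v_n\rangle=L_n+O(1/n)$ is wrong. Relation \eqref{relation} only says that $|\phi^{\beta,\eta}_{[0,n-1]}|$ is (up to bounded factors) one \emph{entry} of $M_n$, yielding $v_n(x)\le\tfrac{1}{n}\log\|M_n\|+O(1/n)$ pointwise; it gives no control in the other direction, and there is no ``boundary contribution of size $O(1/n)$'' argument that closes this gap. To get $\langle v_n\rangle\ge L_n-o(1)$ one genuinely needs a Thouless-type identity for CMV determinants or a Cartan/Riesz argument, which is substantially more work than what the paper's two-interval trick requires. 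So your proposal is not wrong in spirit, but it overlooks the structural shortcut that makes the lemma nearly immediate.
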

\begin{proof}
Without loss of generality, we assume that $0\leq j\leq k\leq n-1$ and $n\gg N_{0}(\gamma)$. Then one can obtain that
\begin{align*}
|G^{\beta,\eta}_{[0,n-1]}(j,k;z)|&=\rho_{j}\cdots\rho_{k-1}\Big| \frac{\varphi^{\beta,\cdot}_{[0,j-1]}(z)\varphi^{\cdot,\eta}_{[k+1,n-1]}(z)}{\varphi^{\beta,\eta}_{[0,n-1]}(z)}\Big|\\
&\leq\Big| \frac{\varphi^{\beta,\cdot}_{[0,j-1]}(z)\varphi^{\cdot,\eta}_{[k+1,n-1]}(z)}{\varphi^{\beta,\eta}_{[0,n-1]}(z)}\Big|\\
&\lesssim \frac{\|M_{j}(\omega,z;x)\|\|M_{n-k}(\omega,z;T^{k}x)\|}{\|M_{n}(\omega,z;x)\|}
\end{align*}
provided that $\varphi^{\beta,\eta}_{[0,n-1]}(z)\gtrsim \|M_{n}(\omega,z;x)\|$.

Let $\Omega$ be the set of LDT. That is to say,
$$\Omega=\{x\in \mathbb{T}^{d}:\big|\log\|M_{n}(\omega,z;x)\|-nL_{n}(\omega,z)\big|>n^{1-\tau}\}.$$
According to the relation between the characteristic determinant and the $n$-step transfer matrix, for $x\notin\Omega$, we get that
$$e^{nL_{n}(\omega,z)-n^{1-\tau}}<\|M_{n}(\omega,z;x)\|<e^{nL_{n}(\omega,z)+n^{1-\tau}}.$$
From Lemma \ref{Lemma3.4}, we have that
$$L_{n}(\omega,z)<L(\omega,z)+C\frac{(\log n)^{1/\sigma}}{n}.$$
Thus,
$$e^{nL(\omega,z)+C(\log n)^{1/\sigma}-n^{1-\tau}}<\|M_{n}(\omega,z;x)\|<e^{nL(\omega,z)+C(\log n)^{1/\sigma}+n^{1-\tau}}.$$
As a consequence, we obtain that
\begin{align*}
|G^{\beta,\eta}_{[0,n-1]}(j,k;z)|& \lesssim\frac{\|M_{j}(\omega,z;x)\|\|M_{n-k}(\omega,z;T^{k}x)\|}{\|M_{n}(\omega,z;x)\|}\\
&\leq\frac{e^{(n-(k-j))L(z)+2C(\log n)^{1/\sigma}+n^{1-\tau}}}{e^{nL(z)+C(\log n)^{1/\sigma}-n^{1-\tau}}}\\
&\leq e^{-L(z)|j-k|+n^{1-}}.
\end{align*}
\end{proof}

\subsection{Semialgebraic Sets}
In this section, we recall some basic preliminaries about the semialgebraic sets developed by Bourgain, which are extremely powerful tools to prove the main theorem in the present paper.
\begin{definition 2}\cite[Definition 9.1]{Bourgain-book}\label{semialgebraic-set}
A set $\mathcal{S}\subset \mathbb{R}^{n}$ is called semialgebraic if it is a finite union of sets defined by a finite number of polynomial equalities and inequalities. More precisely, let $\mathcal{P}=\{P_{1},\ldots,P_{s}\}\subset R[X_{1},\ldots,X_{n}]$ be a family of real polynomials whose degrees are bounded by $d$. A (closed) semialgebraic set $\mathcal{S}$ is given by an expression
\begin{equation}\label{semialgebraic(1)}
\mathcal{S}=\bigcup\limits_{j}\bigcap\limits_{l\in \mathcal{L}_{j}}\{R^{n}|P_{l}s_{jl}0\},
\end{equation}
where $\mathcal{L}_{j}\subset\{1,\ldots,s\}$ and $s_{jl}\in \{\geq,\leq,=\}$ are arbitrary. We say that $\mathcal{S}$ has degree at most $sd$, and its degree is the infimum of $sd$ over all representations as in \eqref{semialgebraic(1)}.
\end{definition 2}

\begin{lemma}\cite[Proposition 9.2]{Bourgain-book}\label{Prop9.2}
Let $\mathcal{S}\subset \mathbb{R}^{n}$ be semialgebraic defined in terms of $s$ polynomials of degree at most $d$ as in \eqref{semialgebraic(1)}. Then there exists a semialgebraic description of its projection onto $\mathbb{R}^{n-1}$ by a formula involving at most $s^{2n}d^{O(n)}$ polynomials of degree at most $d^{O(n)}$. In particular, if $\mathcal{S}$ has degree $B$, then any projection of $\mathcal{S}$ has degree at most $B^{C}$, $C=C(n)$.
\end{lemma}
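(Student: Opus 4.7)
The plan is to prove this as an effective form of the Tarski--Seidenberg projection theorem. Projecting $\mathcal{S} \subset \mathbb{R}^n$ onto $\mathbb{R}^{n-1}$ amounts to eliminating the existential quantifier ``$\exists X_n \in \mathbb{R}$'' from the defining formula \eqref{semialgebraic(1)}, so it suffices to carry out this elimination with controlled growth in both the number of polynomials and their degrees. I would first regard each defining polynomial $P_i \in \mathbb{R}[X_1,\ldots,X_n]$ as a polynomial in $X_n$ with coefficients in $\mathbb{R}[X_1,\ldots,X_{n-1}]$ of total degree at most $d$. The realizable sign patterns $(\sgn P_1,\ldots,\sgn P_s)$ as $X_n$ ranges over $\mathbb{R}$ are controlled by where any $P_i$ has a real root and where two $P_i, P_j$ share a real root.

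Next, I would introduce the auxiliary polynomials that govern these events: the leading coefficients $\mathrm{lc}(P_i)$, the discriminants $\mathrm{disc}(P_i)$, the resultants $\mathrm{Res}_{X_n}(P_i,P_j)$, and the principal subresultant coefficients of all pairs. These all lie in $\mathbb{R}[X_1,\ldots,X_{n-1}]$; Sylvester-matrix bounds give each a degree $O(d^2)$ in the remaining variables, with a total count $O(s^2 d)$. On each connected component of $\mathbb{R}^{n-1}$ cut out by sign conditions on these auxiliary polynomials, the number and ordering of real $X_n$-roots of every $P_i$ is constant, and hence the set of realized sign patterns is constant.

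On each such cell the question ``does there exist $X_n$ satisfying the sign conditions defining $\mathcal{S}$?'' reduces to a purely combinatorial check of the realized sign patterns, independent of the base point. Therefore the projection is a Boolean combination of sign conditions on the auxiliary polynomials, which is a semialgebraic description of the required form. The corollary then follows by absorbing both $s$ and $d$ into $B$ and letting $C$ depend only on $n$.

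The main obstacle is the quantitative bookkeeping. Each conceptual layer of the construction roughly squares the degree (via resultant and subresultant computations) and multiplies the polynomial count by $s^{O(1)}$. Tracing how auxiliary polynomials arising at one stage feed into the next --- most conveniently organized via the critical-point method or a cylindrical algebraic decomposition --- and accounting for all $n$ such layers is what yields the stated bounds $s^{2n} d^{O(n)}$; the ambient dimension $n$ enters precisely through the depth of these nested constructions rather than through any single elimination step.
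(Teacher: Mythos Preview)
The paper does not prove this lemma at all: it is quoted verbatim from Bourgain's book \cite[Proposition 9.2]{Bourgain-book} as a black-box tool, with no argument supplied. So there is no ``paper's own proof'' to compare against.

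Your outline is a reasonable sketch of the standard route to effective Tarski--Seidenberg (resultants/subresultants controlling the real-root structure in the eliminated variable, then a cell decomposition of the base), and this is indeed the circle of ideas behind the result Bourgain cites; the actual proof in the literature (e.g., Basu--Pollack--Roy) follows essentially this strategy. That said, what you have written is a plan rather than a proof: the precise bound $s^{2n}d^{O(n)}$ does not fall out of the rough accounting you give (one resultant step yields degree $O(d^2)$ and $O(s^2 d)$ polynomials, but iterating this naively over $n$ variables would give doubly-exponential degree growth, not $d^{O(n)}$), and obtaining singly-exponential bounds requires the more careful critical-point or block-elimination machinery that you only allude to at the end. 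For the purposes of this paper, citing the result is entirely appropriate; if you want to supply a proof, you would need to either fill in that machinery or point to a specific theorem in the real-algebraic-geometry literature that delivers the stated exponents.
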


\begin{lemma}\cite[Corollary 9.7]{Bourgain-book}\label{Coro9.7}
Let $\mathcal{S}\subset[0,1]^{n}$ be semialgebraic of degree $B$ and $\mathrm{mes}_{n}\mathcal{S}<\eta$. Let $\omega\in \mathbb{T}^{n}$ satisfy Diophantine condition and $N$ be a large integer,
$$\log B\ll \log N<\log\frac{1}{\eta}.$$
Then for any $x_{0}\in \mathbb{T}^{n}$,
\begin{equation}\label{Coro9.7-(1)}
\#\{k=1,\ldots,N| x_{0}+k\omega\in \mathcal{S}(\mathrm{mod}\,1)\}<N^{1-\delta}
\end{equation}
for some $\delta=\delta(\omega)$.
\end{lemma}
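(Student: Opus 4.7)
The plan is to combine a box-covering of the semialgebraic set $\mathcal{S}$ whose cardinality grows only polynomially in the degree $B$, with a quantitative Weyl-type discrepancy estimate for the Diophantine orbit $\{x_{0}+k\omega\}_{k=1}^{N}$ on $\mathbb{T}^{n}$ that is uniform in the starting point $x_{0}$.

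First I would cover $\mathcal{S}$ by a family $\mathcal{Q}$ of axis-aligned boxes of a common sidelength $\rho$, to be chosen at the end. Using a Yomdin--Gromov type cell decomposition for semialgebraic sets, or more elementarily the projection estimate in Lemma \ref{Prop9.2} applied slice-by-slice in each coordinate, the measure bound $\mathrm{mes}_{n}\,\mathcal{S}<\eta$ combined with the complexity bound yields $\#\mathcal{Q}\lesssim B^{C(n)}(\eta\rho^{-n}+\rho^{-(n-1)})$, where the first term absorbs the bulk of $\mathcal{S}$ and the second term absorbs a neighborhood of its boundary. Then, applying the Erd\H{o}s--Tur\'an--Koksma inequality on $\mathbb{T}^{n}$ to a single box $Q$ of sidelength $\rho$, together with the Diophantine condition $\|k\cdot\omega\|\geq p/|k|^{q}$, one gets $\#\{1\leq k\leq N:x_{0}+k\omega\in Q\pmod 1\}\leq N\rho^{n}+N^{1-\delta_{0}}$ uniformly in $x_{0}$, for some $\delta_{0}=\delta_{0}(p,q,n)>0$. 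Summing over the covering produces the total bound $\#\{1\leq k\leq N:x_{0}+k\omega\in\mathcal{S}\pmod 1\}\lesssim B^{C(n)}\bigl(N\eta+N\rho+N^{1-\delta_{0}}\rho^{-(n-1)}\bigr)$.

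Finally, I would optimize in the scale $\rho$ and absorb the polynomial factor $B^{C(n)}$ using the hypothesis $\log B\ll \log N$. The contribution $N\eta$ is already small since $\eta<N^{-1}$, while choosing $\rho\asymp N^{-\delta_{0}/n}$ balances the remaining two terms and yields a bound of the form $N^{1-\delta}$ for some $\delta=\delta(\omega)>0$. The main obstacle, and the reason this is a serious lemma rather than a triviality, is keeping the complexity factor $B^{C(n)}$ genuinely polynomial (not exponential) in the representation of $\mathcal{S}$ while ensuring that the covering is tight enough to capture the full measure; this rests squarely on the semialgebraic projection machinery of Lemma \ref{Prop9.2}. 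Simultaneously, extracting a discrepancy estimate that is uniform in $x_{0}$ with explicit control on the Diophantine exponent $q$ requires care in the Fourier-analytic step, and the two strands fit together cleanly only inside the precise parameter window $\log B\ll \log N<\log(1/\eta)$ dictated by the hypotheses.
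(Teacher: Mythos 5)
The paper does not actually prove this lemma---it is quoted verbatim from Bourgain's book (Corollary 9.7 there)---so the comparison is with Bourgain's original argument rather than with anything in this paper. Your proposal is essentially correct and follows the same two-step strategy as Bourgain: (i) cover $\mathcal{S}$ by $\lesssim B^{C(n)}\bigl(\eta\rho^{-n}+\rho^{-(n-1)}\bigr)$ grid boxes of side $\rho$, the boundary term coming from the fact that $\partial\mathcal{S}$ lies in a union of at most $B$ algebraic hypersurfaces of degree at most $B$, each of which meets only $O(B^{C}\rho^{-(n-1)})$ grid cells; (ii) count orbit points per box. Step (i) is exactly Bourgain's covering lemma; your parenthetical suggestion that it follows from Lemma \ref{Prop9.2} ``applied slice-by-slice'' is not really how one gets the $\rho^{-(n-1)}$ boundary count---the correct input is the Crofton/Yomdin--Gromov-type bound on the number of cells met by a degree-$B$ hypersurface, which you also invoke, so this is a presentational rather than substantive issue. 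In step (ii) you diverge from Bourgain by using the Erd\H{o}s--Tur\'an--Koksma inequality: with $H=N^{1/(q+1)}$ and the linear-form Diophantine condition \eqref{standard-DC} this gives a discrepancy $O(N^{-1/(q+1)}(\log N)^{n})$ uniformly in $x_{0}$ and in the box, hence $N\rho^{n}+N^{1-\delta_{0}}$ points per box, and your optimization $\rho\asymp N^{-\delta_{0}/n}$ then yields $N^{1-\delta}$ after absorbing $B^{C}$ via $\log B\ll\log N$ and $N\eta<1$ via $\eta<1/N$. (You silently drop the cross term $B^{C}N^{1-\delta_{0}}\eta\rho^{-n}$ when multiplying out the two sums, but with your choice of $\rho$ it is $O(B^{C}N\eta)$ and harmless.) The ETK route is arguably cleaner here than a per-box separation argument, because the hypothesis \eqref{standard-DC} is a linear-form condition, which is precisely what the Weyl-sum estimate needs, whereas bounding the number of returns to a single small box by orbit separation would require a simultaneous-approximation condition obtained only after a transference step. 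So the proof is sound; the only point I would insist you tighten is the justification of the boundary-cell count in the covering step.
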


\begin{lemma}\cite[Lemma 9.9]{Bourgain-book}\label{Lemma9.9}
Let $\mathcal{S}\subset[0,1]^{2n}$ be semialgebraic of degree $B$ and $\mathrm{mes}_{2n}\mathcal{S}<\eta$, $\log B\ll \log \frac{1}{\eta}$. We denote $(\omega,x)\in [0,1]^{n}\times [0,1]^{n}$ the product variable. Fix $\varepsilon>\eta^{\frac{1}{2n}}$. Then there is a decomposition
$$\mathcal{S}=\mathcal{S}_{1}\cup\mathcal{S}_{2}$$
with $\mathcal{S}_{1}$ satisfying
\begin{equation}\label{Lemma9.9-(1)}
\mathrm{mes}(\mathrm{Proj}_{\omega}\mathcal{S}_{1})<B^{C}\varepsilon
\end{equation}
and $\mathcal{S}_{2}$ satisfying the transversality property
\begin{equation}\label{Lemma9.9-(2)}
\mathrm{mes}(\mathcal{S}_{2}\cap L)<B^{C}\varepsilon^{-1}\eta^{1/2n}
\end{equation}
for any $n$-dimensional hyperplane $L$ such that $\mathop{\max}\limits_{0\leq j\leq n-1}|\mathrm{Proj}_{L}(e_{j})|<\frac{1}{100}\varepsilon$ (we denote $(e_{0},\ldots,e_{n-1})$ the $\omega$-coordinate vectors).
\end{lemma}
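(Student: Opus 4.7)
The plan is to decompose $\mathcal{S}$ via a thickness dichotomy on its $\omega$-fibers, using Proposition \ref{Prop9.2} to keep every step semialgebraic of polynomial-in-$B$ degree. First I would iterate Proposition \ref{Prop9.2} to obtain an effective cylindrical algebraic decomposition of $\mathcal{S}$ into finitely many semialgebraic cells of degree at most $B^{C_1(n)}$, each piecewise structured so that $\omega \mapsto \mathrm{mes}_n(\mathcal{S}(\omega))$, with $\mathcal{S}(\omega):=\{x:(\omega,x)\in \mathcal{S}\}$, is piecewise algebraic. Consequently, the threshold set $\Omega_\delta := \{\omega : \mathrm{mes}_n(\mathcal{S}(\omega)) > \delta\}$ is itself semialgebraic of degree $\le B^{C_2(n)}$ for every $\delta > 0$, even though the measure functional is not first-order.

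Next, calibrate a threshold $\delta$ to balance the two target bounds (schematically $\delta \asymp B^{-C}\eta^{1/2n}$) and define $\mathcal{S}_1 := \mathcal{S}\cap(\Omega_\delta\times[0,1]^n)$, $\mathcal{S}_2 := \mathcal{S}\setminus \mathcal{S}_1$. For \eqref{Lemma9.9-(1)}: Fubini gives $\eta \ge \mathrm{mes}(\mathcal{S}) \ge \delta\cdot\mathrm{mes}(\Omega_\delta)$, so
\begin{equation*}
\mathrm{mes}(\mathrm{Proj}_\omega \mathcal{S}_1) \le \mathrm{mes}(\Omega_\delta) \le \eta/\delta,
\end{equation*}
which together with the hypothesis $\varepsilon > \eta^{1/2n}$ yields the target $B^C\varepsilon$ after absorbing the degree factors from the saturation of $\Omega_\delta$.

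For \eqref{Lemma9.9-(2)}: the hypothesis $\max_j |\mathrm{Proj}_L(e_j)| < \varepsilon/100$ forces every $\omega$-coordinate axis to be $\varepsilon$-near perpendicular to $L$, so $L$ is $\varepsilon$-close to horizontal. Writing $L$ as a graph $x \mapsto (\phi(x),x)$ over its $x$-projection with $\|\nabla\phi\| \lesssim \varepsilon$ confines all $\omega$-values on $L$ to a tube of radius $\lesssim \varepsilon$ about some $\omega_0$. Combining (i) the thin-fiber property $\mathrm{mes}_n(\mathcal{S}_2(\omega)) \le \delta$ for $\omega$ in this tube, (ii) the near-isometry of the $x$-projection from $L$, and (iii) a semialgebraic Remez-type inequality (which upgrades the pointwise fiber bound to a bound along near-horizontal slices by exploiting the polynomial degree of $\mathcal{S}_2$) delivers the required $B^C \varepsilon^{-1}\eta^{1/2n}$.

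The main obstacle is the first step: producing a decomposition in which ``fiber measure exceeds $\delta$'' becomes a semialgebraic predicate with degree polynomial in $B$. A naive application of Tarski--Seidenberg blows the degree up to a tower of exponentials; the polynomial control requires the quantitative cylindrical algebraic decomposition (in the style of Basu--Pollack--Roy), together with the careful degree bookkeeping that pervades Bourgain's semialgebraic framework. Once that technology is in place, the calibration of $\delta$ and the change-of-variables for the transversality estimate are comparatively direct, but the bookkeeping is genuinely the delicate heart of the argument.
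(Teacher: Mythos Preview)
The paper does not prove this lemma at all: it is stated with a citation to \cite[Lemma 9.9]{Bourgain-book} and used as a black box in the proof of Theorem~\ref{main-theorem}. There is therefore nothing in the paper to compare your proposal against.

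That said, a brief comment on your sketch. The overall architecture---split $\mathcal{S}$ according to whether the $\omega$-fiber $\mathcal{S}(\omega)$ is thick or thin, use Fubini for $\mathcal{S}_1$, and exploit near-horizontality of $L$ for $\mathcal{S}_2$---is indeed the skeleton of Bourgain's argument. However, your treatment of \eqref{Lemma9.9-(2)} is where the real content lies and where your outline is vaguest. The thin-fiber bound $\mathrm{mes}_n(\mathcal{S}_2(\omega))\le\delta$ for each fixed $\omega$ does \emph{not} by itself control $\mathrm{mes}_n(\mathcal{S}_2\cap L)$ when $L$ sweeps through an $\varepsilon$-tube of $\omega$-values: as $\omega$ varies, the fibers $\mathcal{S}_2(\omega)$ can move, and a priori $L$ could pick up contributions from many disjoint thin fibers. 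The phrase ``semialgebraic Remez-type inequality'' does not name a specific mechanism here; what is actually needed is a covering of $\mathcal{S}_2$ by $\lesssim B^{C}$ graphs (or connected pieces) over the $x$-variables, together with a count of how many such pieces a near-horizontal $n$-plane can meet. That is where the $\varepsilon^{-1}$ factor and the degree dependence genuinely enter, and it requires the Yomdin--Gromov style semialgebraic cell decomposition rather than just Tarski--Seidenberg or Basu--Pollack--Roy degree bounds on projections. Your identification of the ``main obstacle'' is slightly misplaced: making $\Omega_\delta$ semialgebraic with controlled degree is routine once one has a CAD; the delicate step is the transversality estimate for $\mathcal{S}_2$.
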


\section{Proof of Theorem \ref{main-theorem}}\label{sec:proof}

The proof employs a multi-scale analysis based on Green's function estimates and double resonance exclusion.

The core strategy is to show that for sufficiently large scale $n_0$, the finite-volume Green's function $G_{[0,n_0-1]}^{\beta,\eta}$ exhibits exponential off-diagonal decay for most phases $x$. This decay property is the key indicator of localization and is formalized as:

\begin{equation}\label{AL-(1)}
|G_{[0,n_{0}-1]}^{\beta,\eta}(m_{1},m_{2};z)|<e^{-\gamma |m_{1}-m_{2}|+n_{0}^{1-}}
\end{equation}
for all $0\leq m_{1}, m_{2}\leq n_{0}-1$. To operationalize this condition, we replace \eqref{AL-(1)}  with an equivalent determinant-based criterion that is more suitable for semi-algebraic analysis:
\begin{align}
\mathop{\sum}\limits_{0\leq m_{1}, m_{2}\leq n_{0}-1}&e^{2\gamma|m_{1}-m_{2}|}\big|\mathrm{det}[(m_{1},m_{2})-\mathrm{minor}\,\, \mathrm{of}\,\, (z-\mathcal{E}_{[0,n_{0}-1]}^{\beta,\eta}(x,\omega)) \big|^{2} \nonumber\\
&<e^{2n_{0}^{1-}}\big|\mathrm{det}(z-\mathcal{E}_{[0,n_{0}-1]}^{\beta,\eta}(x,\omega))\big|^{2}.\label{AL-(2)}
\end{align}
This reformulation is crucial as it expresses the Green's function decay in terms of polynomial conditions on the matrix entries, enabling the application of semi-algebraic geometry tools.

The remainder of the proof will establish that condition \eqref{AL-(2)} holds for most phases $x$ and frequencies $\omega$, outside a set of exponentially small measure.

Fix $n_0$ and consider the property \eqref{AL-(2)}. The left-hand side of \eqref{AL-(2)} is a polynomial in $(\operatorname{Re}\alpha(x),\operatorname{Im}\alpha(x),\operatorname{Re} z,\operatorname{Im} z)$. Since $\Re\alpha(x)$ and $\Im \alpha(x)$ are real analytic functions,  we can replace them by the trigonometric polynomials
$$f(x)=\mathop{\sum}\limits_{|k|\leq C n_{0}} f_{k}e^{i\langle k,x \rangle} \quad \mathrm{and}\quad g(x)=\mathop{\sum}\limits_{|k|\leq C n_{0}} g_{k}e^{i\langle k,x \rangle}$$
respectively with error less than $e^{-\tilde{C}n_{0}^{2}}$ ($\tilde{C}$ a constant), where $|f_{k}|, |g_{k}|<\exp(-r|k|)$, $r$ is a constant, $C$ is a sufficiently large constant. Then we can obtain a polynomial inequality
\begin{equation}\label{AL-(3)}
P(\cos \omega_1,\cdots, \cos \omega_d, \sin \omega_1, \cdots, \sin \omega_d,\cos x_1, \cdots, \cos x_d,\sin x_1, \cdots, \sin x_d,\Re z, \Im z)>0
\end{equation}
to replace the condition \eqref{AL-(2)}, where the degree of
$$P(\cos \omega_1,\cdots, \cos \omega_d, \sin \omega_1, \cdots, \sin \omega_d, \cos x_1, \cdots, \cos x_d,\sin x_1, \cdots, \sin x_d, \Re z, \Im z)$$
is at most $C_{1}n_{0}^{2}$. One can further truncate the Taylor series of the trigonometric functions and replace \eqref{AL-(3)} by a polynomial inequality
\begin{equation}\label{AL-(4)}
P(\omega, x, \Re z, \Im z)>0
\end{equation}
whose degree is at most $C_{2}n_{0}^{3}$.

Fix $\omega\in \mathbb{T}^{d}(p,q)$ and $z \in \mathcal{I_{\alpha}}$. According to Lemma \ref{Green-estimate-element}, the exceptional set $\Omega$ does not only satisfy
\begin{equation}\label{AL-(5)}
\mathrm{mes}\Omega<\exp(-C_{0}n^{\sigma})
\end{equation}
but also may be assumed semialgebraic of degree less than $Cn_{0}^{3}$. Notice that this set depends on $z$ and $\omega$.

Fix $n=n_{0}$ in Lemma \ref{Green-estimate-element} and redefine $\Lambda$ to be one of the intervals
\begin{equation}\label{AL-(6)}
\{[-n_{0}+1,n_{0}-1], [-n_{0},n_{0}-1]\}.
\end{equation}
Let $\Omega=\Omega(z)$ be as above. For $x\notin \Omega$, one of the intervals $\Lambda$ (depending on $x$) satisfies
\begin{equation}\label{AL-(7)}
|G_{\Lambda}^{\beta,\eta}(m_{1},m_{2};z)|<e^{-\gamma|m_{1}-m_{2}|+n_{0}^{1-}}
\end{equation}
for all $m_{1}, m_{2}\in \Lambda$.

Fix $x_{0}\in \mathbb{T}^{d}$. Now we consider the orbit $\{x_{0}+j\omega:|j|\leq n_{1}\}$. In this case, we let $n_{1}=n_{0}^{C}$, where $C$ is a sufficiently large constant.

Apply Lemma \ref{Coro9.7} with $\mathcal{S}=\Omega$, $B=C_{2}n_{0}^{3}$, $\eta=\exp(-C_{0}n^{\sigma})$ and $n=n_{1}$. Then the statement \eqref{Coro9.7-(1)} implies that except for at most $n_{1}^{1-\delta}$ values of $|j|<n_{1}$, taking $x=x_{0}+j\omega$, one of the intervals $\Lambda$ from \eqref{AL-(6)} satisfies \eqref{AL-(7)}.

Assume that $u=\{u(n)\}_{n\in \mathbb{Z}}$ satisfies the equation
$$\tilde{\mathcal{E}}(x_{0},\omega)u=zu$$
with $u(0)=1$ and $|u(n)|\lesssim n^{C}$, i.e., $z$ and $u$ are a generalized eigenvalue and a generalized eigenfunction of  $\tilde{\mathcal{E}}(x_{0},\omega)$,  respectively.

If $\Lambda+j=[a,b]$, then $\mathcal{E}_{\Lambda+j}=R_{\Lambda+j}\tilde{\mathcal{E}}R_{\Lambda+j}^{*}$. According to the Poisson's formula,
\begin{align*}
u(n)=&G^{\beta,\eta}_{[a,b]}(a, n;z)
\begin{cases}
(z\overline{\beta}-\alpha_{a})u(a)-\rho_{a}u(a+1), \quad & a\text{ even}\\
(z\alpha_{a}-\beta)u(a)+z\rho_{a}u(a+1),& a \text { odd}
\end{cases}\\
&+G^{\beta,\eta}_{[a,b]}(n,b;z)
\begin{cases}
(z\overline{\eta}-\alpha_{b})u(b)-\rho_{b}u(b-1), \quad & b\text{ even}\\
(z\alpha_{b}-\eta)u(b)+z\rho_{b-1}u(b-1),& b \text { odd}
\end{cases}
\end{align*}
for $a<n<b$. Therefore,
\begin{align*}
|u(n)|&\leq n^{C}|G^{\beta,\eta}_{[a,b]}(a, n;z)|+n^{C}|G^{\beta,\eta}_{[a,b]}(n,b;z)|\\
&\leq n^{C}e^{-\gamma |n-a|+n_{0}^{1-}}+n^{C}e^{-\gamma |b-n|+n_{0}^{1-}}\\
&\leq n^{C}e^{n_{0}^{1-}}(e^{-\gamma|n-a|}+e^{-\gamma|b-n|}).
\end{align*}
In particular, taking $n=j$, we have that $|j-a|>\frac{n_{0}}{2}$ and $|j-b|>\frac{n_{0}}{2}$. Then it follows that
\begin{equation}\label{AL-(8)}
|u(j)|<e^{-\frac{\gamma}{2}n_{0}}
\end{equation}
holds except at most $n^{1-\delta}$ values of $|j|<n_{1}$.

Next, let $I=[-j_{0}+1,j_{0}-1]$. Then $\mathcal{E}_{I}=R_{I}\tilde{\mathcal{E}}R_{I}^{*}$. Hence,
\begin{align*}
1=|u(0)|\leq&|G^{\beta,\eta}_{I}(-j_{0}+1, 0;z)|(2|u(-j_{0}+1)|+|u(-j_{0}+2)|)\\
&+|G^{\beta,\eta}_{I}(0,j_{0}-1;z)|(2|u(j_{0}-1)|+|u(j_{0}-2)|).
\end{align*}
If $-j_{0}+1$, $-j_{0}+2$, $j_{0}-2$, $j_{0}-1$ satisfy \eqref{AL-(8)}, one can obtain that
\begin{equation}\label{AL-(9)}
\|G_{[-j_{0}+1,j_{0}-1]}^{\beta,\eta}(x_{0},z)\|\geq \frac{1}{6}e^{\frac{\gamma}{2}n_{0}}.
\end{equation}
Equivalently,
\begin{equation}\label{AL-(10)}
\mathrm{dist}(z,\sigma(\mathcal{E}_{[-j_{0}+1,j_{0}-1]}^{\beta,\eta}(x_{0})))<6e^{-\frac{\gamma}{2}n_{0}}.
\end{equation}
Therefore, fixing $z \in \mathcal{I_{\alpha}}$, if there is a state $u$ with $u(0)=1$, then for any large $n_{0}$, there exists some $j_{0}$, $|j_{0}|<n_{1}=n_{0}^{C}$ for which \eqref{AL-(10)} holds.

Denote $\Sigma_{\omega}=\mathop{\cup}\limits_{|j|\leq n_{1}}\sigma(\mathcal{E}_{[-j,j]}^{\beta,\eta}(x_{0}))$. It follows from \eqref{AL-(7)} and \eqref{AL-(10)} that if
\begin{equation}\label{AL-(11)}
x\notin \mathop{\cup}\limits_{z'\in \Sigma_{\omega}}\Omega(z'),
\end{equation}
then one of the sets in \eqref{AL-(6)} satisfies
\begin{equation}\label{AL-(12)}
|G_{\Lambda}^{\beta,\eta}(m_{1},m_{2};z)|<e^{-\gamma|m_{1}-m_{2}|+n_{0}^{1-}}
\end{equation}
for $m_{1}, m_{2}\in \Lambda$.

Now consider the interval $[\frac{n_{2}}{2},2n_{2}]$ with $n_{2}=n_{0}^{C'}$ ($C'$ a sufficiently large constant). Suppose that we ensured that
\begin{equation}\label{AL-(13)}
x_{0}+n\omega\notin \mathop{\cup}\limits_{z'\in \Sigma_{\omega}}\Omega(z')(\mathrm{mod}\, 1)\quad \mathrm{for}\,\,\mathrm{all}\,\, \frac{n_{2}}{2}<|n|<2n_{2}.
\end{equation}
Thus, for each $\frac{n_{2}}{2}<|n|<2n_{2}$, there is an interval
$$\Lambda^{(n)}\in\{[-n_{0}+1,n_{0}-1], [-n_{0},n_{0}-1]\}$$
for which \eqref{AL-(12)} holds:
\begin{equation}\label{AL-(14)}
|G_{\Lambda^{(n)}+n}^{\beta,\eta}(m_{1},m_{2};z)|<e^{-\gamma|m_{1}-m_{2}|+n_{0}^{1-}}\,\, \mathrm{for}\,\, m_{1},\, m_{2}\in \Lambda^{(n)}+n.
\end{equation}
Define the interval
$$\tilde{\Lambda}=\mathop{\cup}\limits_{\frac{n_{2}}{2}<n<2n_{2}}(\Lambda^{(n)}+n)\supset [\frac{n_{2}}{2},2n_{2}].$$
By the paving property from \cite[Page 839]{BG00-Annals} or \cite[Appendix 7.2]{WD19-JFA}, one can obtain that
\begin{equation}\label{AL-(15)}
|G_{\tilde{\Lambda}}^{\beta,\eta}(m_{1},m_{2};z)|<e^{-\frac{\gamma}{2}|m_{1}-m_{2}|+n_{0}^{1-}}\,\, \mathrm{for}\,\, m_{1},\, m_{2}\in [\frac{n_{2}}{2},2n_{2}].
\end{equation}
Therefore, for $k\in [\frac{n_{2}}{2},2n_{2}]$, we get that
\begin{align*}
|u(k)|&\leq |G_{[\frac{n_{2}}{2},2n_{2}]}^{\beta,\eta}(\frac{n_{2}}{2},k;z)|n_{2}^{C}
+|G_{[\frac{n_{2}}{2},2n_{2}]}^{\beta,\eta}(k,2n_{2};z)|n_{2}^{C}\\
&<n_{2}^{C}e^{-\frac{\gamma}{2}|k-\frac{n_{2}}{2}|}+n_{2}^{C}e^{-\frac{\gamma}{2}|k-2n_{2}|}\\
&<e^{-\frac{\gamma}{2}k}.
\end{align*}
For $k\in \mathbb{Z}_{-}$, we can get the same result. This implies that the exponential decay property follows.

Now we verify the assumption \eqref{AL-(13)}.

For $|j|\leq n_{1}$, we consider the set
\begin{equation*}
\mathcal{B}=\{(\omega,z',x):\omega\in \mathbb{T}^{d}(p,q), z'\in \sigma(\mathcal{E}_{[-j,j]}^{\beta,\eta}(x_{0})), x\in \Omega(z')\}\subset \mathbb{T}^{d}\times \partial\mathbb{D}\times\mathbb{T}^{d}.
\end{equation*}
For fixed $\omega$, the condition $z'\in\sigma(\mathcal{E}_{[-j,j]}^{\beta,\eta}(x_{0}))$ and $x\in \Omega(z')$ can be replaced by the inequalities of the polynomials respectively as before. Therefore, $\mathcal{B}$ is a semialgebraic set of degree at most $C_{3}n_{1}^{3}$. Let $\mathcal{S}=\mathrm{Proj}_{(\omega,x)}\mathcal{B}\subset\mathbb{T}^{d}\times \mathbb{T}^{d}$. Obviously, $\mathrm{mes}\mathcal{S}<\exp(-C_{0}n_{1}^{\sigma})$. According to Lemma \ref{Prop9.2}, $\mathcal{S}$ is a semialgebraic set whose degree is at most $n_{1}^{3C_{4}}$ for some constant $C_{4}$. Returning to assumption \eqref{AL-(13)}, we need to verify that
\begin{equation}\label{AL-(16)}
(\omega,x_{0}+n\omega)\notin \mathcal{S}
\end{equation}
for $\frac{n_{2}}{2}\leq |n|\leq 2n_{2}$. Now we apply Lemma \ref{Lemma9.9} with $n=d$, $B=n_{1}^{3C_{4}}$, $\eta=\exp(-C_{0}n_{1}^{\sigma})$. Take $\varepsilon=n_{2}^{-\frac{1}{10}}$. Then there is a decomposition $\mathcal{S}=\mathcal{S}_{1}\cup\mathcal{S}_{2}$. It follows that
\begin{equation}\label{Al-(17)}
\mathrm{mes}(\mathrm{Proj}_{\omega}\mathcal{S}_{1})<B^{C}\varepsilon<n_{1}^{3C_{4}C}n_{2}^{-\frac{1}{10}}<n_{2}^{-\frac{1}{11}},
\end{equation}
where $n_{2}$ is large enough. Do the partition
$$[0,1]^{d}=\mathop{\bigcup}\limits_{l}\Big(x_{l}+[0,\frac{1}{n_{2}}]^{d}\Big),\,\,l\leq n_{2}^{d}.$$
For fixed $\frac{n_{2}}{2}\leq |n|\leq 2n_{2}$ and $l$, we consider $L=\Big\{(\omega,x_{0}+nx_{l}+n\omega):\omega\in [0,\frac{1}{n_{2}}]^{d}\Big\}$,
which is a translate of the $d$-hyperplane $\Big\{\frac{1}{n}e_{j}+e_{j+d}:0\leq j \leq d-1\Big\}.$
Furthermore, we have that $\mathop{\max}\limits_{0\leq j<d}|\mathrm{Proj}_{L}e_{j}|<\frac{1}{100}\varepsilon<\varepsilon^{2}$. By Lemma \ref{Lemma9.9}, one can obtain that
\begin{small}
$$\mathrm{mes}_{\mathbb{T}^{d}}\Big\{\omega\in [0,\frac{1}{n_{2}}]^{d}:(\omega,x_{0}+nx_{l}+n\omega)\in\mathcal{S}_{2}\Big\}=\mathrm{mes}(\mathcal{S}_{2}\cap L)
<B^{C}\varepsilon^{-1}\eta^{\frac{1}{2d}}
<n_{1}^{3C_{4}C}n_{2}^{\frac{1}{10}}\exp(-\frac{C_{0}}{2d}n_{1}^{\sigma}).$$
\end{small}
Summing the contributions over $n$ and $l$, we have that
\begin{align}
\mathrm{mes}&\Big\{\omega:(\omega,x_{0}+n\omega)\in \mathcal{S}_{2}\,\,\mathrm{for}\,\,\mathrm{some}\,\,\frac{1}{n_{2}}<|n|<2n_{2}\Big\}\nonumber\\
&<n_{2}^{d+1}n_{1}^{3C_{4}C}n_{2}^{\frac{1}{10}}\exp(-\frac{C_{0}}{2d}n_{1}^{\sigma})<\exp(-n_{1}^{\frac{\sigma}{2}}).\label{AL-(18)}
\end{align}
From \eqref{Al-(17)} and \eqref{AL-(18)}, we can get an $\omega$-set whose measure is at most
$$n_{2}^{-\frac{1}{11}}+\exp(-n_{1}^{\frac{\sigma}{2}})<n_{2}^{-\frac{1}{12}}.$$
Summing these sets over $j$, $j\leq n_{1}$, we get an $\omega$-set of measure at most $n_{1}n_{2}^{-\frac{1}{12}}<n_{2}^{-\frac{1}{13}}$. This implies that \eqref{AL-(13)} holds for $\omega$ outside the $\omega$-set we obtained.

Since we initially fixed $n_{0}$ and set $n_{1}=n_{0}^{C}$, $n_{2}=n_{0}^{C'}$, we can denote the above $\omega$-set to be $\Omega_{n_{0}}$. Then we have that $$\mathrm{mes}\Omega_{n_{0}}<n_{2}^{-\frac{1}{13}}=n_{0}^{-\frac{C'}{13}}<n_{0}^{-10}.$$

According to the above analysis, we ensure that if
$$\mathcal{E}(x_{0},z)u=zu,\,\, u(0)=1,\,\, |u(n)|\lesssim n^{C},$$
then
$$|u(k)|<e^{-\frac{\gamma}{4}k}\quad \mathrm{for}\,\,\, k\in\big[\frac{n_{2}}{2},2n_{2}\big].$$
Let $\Omega_{*}=\mathop{\cup}\limits_{n}\mathop{\cap}\limits_{n_{0}>n}\Omega_{n_{0}}$. It follows that $\mathrm{mes}\Omega_{*}=0$ and AL holds for $\omega\in \mathbb{T}^{d}(p,q)\backslash \Omega_{*}$.

$\hfill \Box$

\section{Application to Quantum Walks}\label{sec:appl}

\subsection{Quantum Walks and CMV Matrices}\label{QWCMV}
Quantum walks share a structural similarity with CMV matrices via unitary equivalence. Now we recall some basic knowledge about quantum walks, see \cite[Section 2.4]{DFO16-JMPA}.

A quantum walk is described by a unitary operator on the Hilbert space $\mathcal{H}=\ell^{2}(\mathbb{Z})\otimes \mathbb{C}^{2}$, which models a state space in which a wave packet comes equipped with a ``spin" at each integer site. Notice that the elementary tensors of the form $\delta_{n}\otimes e_{\uparrow}$ and $\delta_{n}\otimes e_{\downarrow}$ with $n\in \mathbb{Z}$ comprise an orthonormal basis of $\mathcal{H}$. Here, $\{e_{\uparrow},e_{\downarrow}\}$ denotes the canonical basis of $\mathbb{C}^{2}$. A time-homogeneous quantum walk scenario is given as soon as coins
\begin{equation}\label{coin}
C_{n}=
\left(
\begin{array}{cc}
c_{n}^{11}&c_{n}^{12}\\
c_{n}^{21}&c_{n}^{22}
\end{array}
\right)\in\mathbb{U}(2), \quad n\in \mathbb{Z}
\end{equation}
are specified. Assume that $c_{n}^{11}, c_{n}^{22}\neq 0$ and $\mathrm{det}\,C_{n}=1$. As one passes from time $t$ to $t+1$, the update rule of the quantum walk is
\begin{align*}
&\delta_{n}\otimes e_{\uparrow}\mapsto c_{n}^{11}\delta_{n+1}\otimes e_{\uparrow}+c_{n}^{21}\delta_{n-1}\otimes e_{\downarrow},\\
&\delta_{n}\otimes e_{\downarrow}\mapsto c_{n}^{12}\delta_{n+1}\otimes e_{\uparrow} +c_{n}^{22} \delta_{n-1}\otimes e_{\downarrow}.
\end{align*}
If we extend this by linearity and continuity to general elements of $\mathcal{H}$, this defines a unitary operator $U$ on $\mathcal{H}$. For a typical element $\psi\in \mathcal{H}$, we may describe the action of $U$ in coordinates via
\begin{align*}
&[U\psi]_{\uparrow,n}=c_{n-1}^{11}\psi_{\uparrow,n-1}+c_{n-1}^{12}\psi_{\downarrow,n-1},\\
&[U\psi]_{\downarrow,n}=c_{n+1}^{21}\psi_{\uparrow,n+1}+c_{n+1}^{22}\psi_{\downarrow,n+1}.
\end{align*}
Then the matrix representation of $U$ is
\begin{equation}\label{matrix-U}
U=
\left(
\begin{array}{ccccccccc}
\ddots&\ddots&\ddots&\ddots&  &  & & & \\
& 0&0&c_{0}^{21}&c_{0}^{22} & & & &\\
& c_{-1}^{11} &c_{-1}^{12}& 0&0& & & & \\
& & & 0 & 0 & c_{1}^{21}& c_{1}^{22} & & \\
& & & c_{0}^{11} & c_{0}^{12} & 0& 0 & & \\
& & & & &0&0 &c_{2}^{21}&c_{2}^{22}\\
& & & & &c_{1}^{11}&c_{1}^{12} &0&0\\
& & & & &\ddots&\ddots &\ddots&\ddots
\end{array}
\right).
\end{equation}

Comparing the above matrix with the extended CMV matrix, the structures of these two matrices are similar. If all Verblunsky coefficients with even index vanish, then the extended CMV matrix becomes
\begin{equation}\label{CMV-vanish}
\mathcal{E}=
\left(
\begin{array}{ccccccccc}
\ddots&\ddots&\ddots&\ddots&  &  & & & \\
& 0&0&\bar{\alpha}_{1}&\rho_{1} & & & &\\
& \rho_{-1}&-\alpha_{-1}& 0&0& & & & \\
& & & 0 & 0 &\bar{\alpha}_{3} & \rho_{3} & & \\
& & & \rho_{1} & -\alpha_{1} & 0& 0 & & \\
& & & & &0&0 &\bar{\alpha}_{5}&\rho_{5}\\
& & & & &\rho_{3}&-\alpha_{3} &0&0\\
& & & & &\ddots&\ddots &\ddots&\ddots
\end{array}
\right).
\end{equation}
The matrix in \eqref{CMV-vanish} strongly resembles the matrix representation of $U$ in \eqref{matrix-U}. However, $\rho_{n}>0$ for all $n$, then \eqref{matrix-U} and \eqref{CMV-vanish} may not match exactly when $c_{n}^{kk}$ is not real and positive. Indeed, this can be easily resolved by conjugation with a suitable diagonal unitary, as shown in
\cite{CMV-quantum-walks}.

Given $U$ as in \eqref{matrix-U}, write
$$c_{n}^{kk}=r_{n}\omega_{n}^{k}, \,\,\, n\in \mathbb{Z},\,\,\,k\in \{1,2\},\,\,\,r_{n}>0,\,\,\,|\omega_{n}^{k}|=1.$$
Define $\{\lambda_{n}\}_{n\in \mathbb{Z}}$ as
$$\lambda_{0}=1,\,\,\,\lambda_{-1}=1,\,\,\,\lambda_{2n+2}=\omega_{n}^{1}\lambda_{2n},\,\,\,\lambda_{2n+1}=\overline{\omega_{n}^{2}}\lambda_{2n-1}.$$
Let $\mathcal{D} =\mathrm{diag}(\cdots,\lambda_{-1},\lambda_{0},\lambda_{1},\cdots)$. Then we can obtain that
\begin{equation}\label{eq:unitary-equiv}
\hat{\mathcal{E}}=\mathcal{D}^{*}U\mathcal{D},
\end{equation}
where $\hat{\mathcal{E}}$ is the extended CMV matrix whose Verblunsky coefficients are defined by
\begin{equation}\label{new-Verblunsky}
\hat{\alpha}_{2n}=0,\,\,\,\hat{\alpha}_{2n+1}=\frac{\lambda_{2n-1}}{\lambda_{2n}}\cdot \overline{c_{n}^{21}}=-\frac{\lambda_{2n+1}}{\lambda_{2n+2}}\cdot c_{n}^{12},\,\,\, n\in \mathbb{Z}.
\end{equation}
Notice that the hypotheses $c_{n}^{11}, c_{n}^{22}\neq 0$ imply $\rho_{n}>0$ for all $n\in \mathbb{Z}$. According to the analysis above, there is a close relation between a quantum walk and an extended CMV matrix whose Verblunsky coefficients with even index are zero. Moreover, it is worth mentioning that in \cite{Cedzich-CMP-2023, Cedzich-IMRN-2024}, Cedzich et al. showed that any extended CMV matrix is gauge-equivalent to a quantum walk.\footnote{Our gauge equivalence covers general split-step quantum walks as defined in \cite{Cedzich-CMP-2023, Cedzich-IMRN-2024}}.

\subsection{Gesztesy-Zinchenko Cocycle}\label{G-Z-C}
Now we consider the case of coined quantum walks on the integer lattice where the coins are distributed quasi-periodically, i.e.,
\begin{equation*}
C_{n}=C_{n,x,\omega}=
\left(
\begin{array}{cc}
c_{n}^{11}(x+n\omega)&c_{n}^{12}(x+n\omega)\\
c_{n}^{21}(x+n\omega)&c_{n}^{22}(x+n\omega)
\end{array}
\right)
\end{equation*}
with analytic sampling functions and frequency vector $\omega\in \mathbb{T}^{d}(p,q)$.  Here we also assume that $\mathrm{det}\,C_{n,x,\omega}=1$. We denote the corresponding quantum walk by $U_{\omega}(x)$.

As for the extended CMV matrix $\hat{\mathcal{E}}$, there is a decomposition $\hat{\mathcal{E}}=\hat{\mathcal{L}}\hat{\mathcal{M}}$ (see Section \ref{3.4} for details). For a given solution $u$ of $\hat{\mathcal{E}}u=zu$, we define $v=\hat{\mathcal{L}}^{-1}u$, then $\hat{\mathcal{M}}u=zv$. According to \cite[Section 2]{FOV18-JMAA}, we have that
\begin{equation*}
\left(
\begin{array}{c}
u(n+1)\\
v(n+1)
\end{array}
\right)
=Y(n,z)
\left(
\begin{array}{c}
u(n)\\
v(n)
\end{array}
\right),
\end{equation*}
where
\begin{equation*}
Y(n,z)=\frac{1}{\rho_{n}}
\begin{cases}
\left(
\begin{array}{cc}
-\hat{\alpha}_{n}&1\\
1&-\overline{\hat{\alpha}}_{n}
\end{array}
\right)\quad  n\,\, \text{is even},\\ \\
\left(
\begin{array}{cc}
-\overline{\hat{\alpha}}_{n}&z\\
z^{-1}&-\hat{\alpha}_{n}
\end{array}
\right)\quad  n\,\, \text {is odd}.
\end{cases}\\
\end{equation*}
For the sake of convenience, we write $Y(n,z)=P(\hat{\alpha}_{n},z)$ when $n$ is even and $Y(n,z)=Q(\hat{\alpha}_{n},z)$ when $n$ is odd. According to \cite[Section 3]{DFLY}, we can define Gesztesy-Zinchenko cocycle by
$$G(\omega,z;x)=Q(\hat{\alpha}(x+\omega),z)P(\hat{\alpha}(x),z).$$
Based upon the above discussion, if the Verblunsky coefficients with even index are equal to zero, then the Gesztesy-Zinchenko cocycle is
\begin{equation*}
G(\omega,z;x)=\frac{1}{\rho(x)}
\left(
\begin{array}{cc}
z&-\overline{\hat{\alpha}}(x)\\
-\hat{\alpha}(x)&z^{-1}
\end{array}
\right).
\end{equation*}
Correspondingly, the $n$-step transfer matrix is
$$G_{n}(\omega,z;x)=\prod_{j=n-1}^{0}G(\omega,z;x+j\omega).$$
Therefore, the associated Lyapunov exponent of the quantum walk $U_{\omega}(x)$ can be defined by
$$L^{U}(w,z):=\lim_{n\rightarrow\infty}\frac{1}{n}\int_{\mathbb{T}^{d}}\log\|G_{n}(\omega,z;x)\|dx.$$

To define the Lyapunov exponent of quantum walks, one can also consider other transfer matrices instead of Gesztesy-Zinchenko cocycle; for more details, see \cite{Cedzich-JST-2024,Cedzich-IMRN-2024}. Indeed, all these cocycles are equivalent.
\begin{remark}\rm
For split-step quantum walks, the transfer matrix in \cite[Eq.(2.4)]{Cedzich-IMRN-2024} provides a non-alternating representation, but our results hold under any equivalent cocycle.
\end{remark}
\subsection{Anderson Localization for Analytic Multi-Frequency Quantum Walks}
According to the analysis in Section \ref{secSCLE}, the Szeg\H{o} cocycle and the $n$-step transfer matrix of the matrix $\hat{\mathcal{E}}$ are
\begin{equation*}
\hat{M}(\omega,z;x)=\frac{1}{\rho(x)}
\left(
\begin{array}{cc}
\sqrt{z}&-\frac{\overline{\hat{\alpha}}(x)}{\sqrt{z}}\\
-\hat{\alpha}(x)\sqrt{z}&\frac{1}{\sqrt{z}}
\end{array}
\right)
\end{equation*}
and $\hat{M}_{n}(\omega,z;x)=\mathop{\prod}\limits_{j=n-1}^{0}\hat{M}(\omega,z;x+j\omega)$, respectively. In addition, the Lyapunov exponent of the matrix $\hat{\mathcal{E}}$ is
$$\hat{L}(w,z):=\lim_{n\rightarrow\infty}\frac{1}{n}\int_{\mathbb{T}^{d}}\log\|\hat{M}_{n}(\omega,z;x)\|dx.$$

By a simple calculation, we know that
\begin{equation*}
G(\omega,z;x)=
\left(
\begin{array}{cc}
\frac{1}{\sqrt{z}}&0\\
0&\sqrt{z}
\end{array}
\right)\hat{M}(\omega,z;x).
\end{equation*}
It is obvious that the difference between the Lyapunov exponent of the quantum walk $U_{\omega}(x)$ and the extended CMV matrix $\hat{\mathcal{E}}$ depends on the norm of the matrix
\begin{equation*}
\left(
\begin{array}{cc}
\frac{1}{\sqrt{z}}&0\\
0&\sqrt{z}
\end{array}
\right)=:M_{0}.
\end{equation*}
Let $C_{\mathrm{nor}}:=\|M_{0}\|$. Then one can obtain that
\begin{align*}
L^{U}(w,z)&=\lim_{n\rightarrow\infty}\frac{1}{n}\int_{\mathbb{T}^{d}}\log\|G_{n}(\omega,z;x)\|dx\\
&\leq\lim_{n\rightarrow\infty}\frac{1}{n}\int_{\mathbb{T}^{d}}\log((C_{\mathrm{nor}})^{n}\cdot\|\hat{M}_{n}(\omega,z;x)\|)dx\\
&=\log C_{\mathrm{nor}}+\hat{L}(\omega,z).
\end{align*}

Since $|z|=1$, then $\|M_{0}\|=1$ and  $\log C_{\mathrm{nor}}=0$. Therefore, we have that
\begin{equation}\label{eq:Lyap}
L^{U}(\omega,z)\leq \hat{L}(\omega,z).
\end{equation}

To this end, building upon Theorem \ref{main-theorem} and the unitary equivalence established in Section \ref{QWCMV}, we
now can present the main localization result for quantum walks with multi-frequency quasi-periodic coins.

\begin{theorem}[Anderson Localization for Quantum Walks]\label{thm:qw-al}
Consider a quantum walk $U_\omega(x)$ on $\ell^2(\mathbb{Z})\otimes\mathbb{C}^2$ with coins $C_n(x) = C(x+n\omega), \omega \in \mathbb{T}^d(p,q)$ satisfying:
\begin{enumerate}[(i).]
    \item $C: \mathbb{T}^d \to U(2)$ is real-analytic with analytic extension to $\mathbb{T}_h^d$;
    \item $\det C(x) = 1$ for all $x \in \mathbb{T}^d$;
     \item The Lyapunov exponent $L^U(\omega,z) > \gamma > 0$ for all $\omega \in \mathbb{T}^d(p,q)$ and all $z \in \mathcal{I}_C \subset \partial\mathbb{D}$, $\mathcal{I}_C$ is a compact interval.
\end{enumerate}
For fixed $x_0 \in \mathbb{T}^d$ and almost every $\omega \in \mathbb{T}^d(p,q)$, the quantum walk exhibits AL. Specifically, there exists a complete set of eigenfunctions $\{\psi_n\}$ such that
\begin{equation*}
U_\omega(x_0)\psi_n = e^{i\theta_n}\psi_n
\end{equation*}
with $\theta_n \in [0,2\pi)$ being the eigenphases, and the eigenfunctions decay exponentially:
\[
|\psi_n(k)| \leq c_n e^{-\gamma|k|/4} \quad \mathrm{for}\,\, \mathrm{all}\,\, k \in \mathbb{Z},
\]
where $c_n$'s are some constants.

Moreover, the unitary operator $U_\omega(x_0)$ has pure point spectrum in $\mathcal{I}_C \subset \partial\mathbb{D}$.
\end{theorem}

\begin{proof}
The proof proceeds via the unitary equivalence established in Section \ref{QWCMV}.

From  \eqref{eq:unitary-equiv}, we see that $U_\omega(x)$ is unitarily equivalent to the extended CMV matrix $\hat{\mathcal{E}}_\omega(x)$ via $\mathcal{D}$.

The Verblunsky coefficients $\hat{\alpha}_n(x)$ of $\hat{\mathcal{E}}_\omega(x)$  satisfy:
    \[
    \hat{\alpha}_{2n} = 0, \quad \hat{\alpha}_{2n+1} = -\frac{\lambda_{2n+1}}{\lambda_{2n+2}} \cdot c_n^{12}(x)
    \]
with $|\lambda_j| = 1$, hence $\hat{\alpha}_n(x)$ are real-analytic on $\mathbb{T}^d$.

The Lyapunov exponent relation \eqref{eq:Lyap} gives that
    \[
    \hat{L}(\omega,z) \geq L^U(\omega,z)> \gamma>0.
    \]

Then applying Theorem \ref{main-theorem} to $\hat{\mathcal{E}}_\omega(x)$ we can conclude that: for a.e.  $\omega \in \mathbb{T}^d(p,q)$, $\hat{\mathcal{E}}_\omega(x_0)$ has AL in $\mathcal{I}_C$.

By unitary equivalence, the eigenfunctions $\phi_n$ of  $\hat{\mathcal{E}}_\omega(x_0)$ correspond to eigenfunctions $\psi_n = \mathcal{D}\phi_n$ of $U_\omega(x_0)$ with identical decay rates.

The pure point spectrum follows from the completeness of eigenfunctions and unitary equivalence.
\end{proof}

\vskip1cm

\noindent{$\mathbf{Acknowledgments}$}

This work was supported in part by the  NSFC (No. 11571327, 11971059).

We extend our gratitude to Prof. Christopher Cedzich (Heinrich Heine Universit\"{a}t D\"{u}sseldorf) for his insightful suggestions on the comprehension of quantum walks, which greatly enhanced the rigorousness of our results.

\vskip1cm

\section*{References}

\end{document}